\newcommand{\Pb}{\mbox{\rm (P)}\xspace}
\newcommand{\Pbt}{\mbox{\rm (P$_\tau$)}\xspace}
\newcommand{\Pbtc}{\mbox{\rm (P$_\tau^c$)}\xspace}
\newcommand{\dx}{\, \textup{d}x}
\newcommand{\dt}{\, \textup{d}t}
\newcommand{\dtau}{\, \textup{d}\tau}
\newcommand{\dus}{\, \textup{d}u(s)}
\newcommand{\dbus}{\, \textup{d}\bar u(s)}
\newcommand{\duas}{\, \textup{d}|u|(s)}
\newcommand{\dbuas}{\, \textup{d}|\bar u|(s)}
\newcommand{\CT}{{C[0,T]}}
\newcommand{\MT}{{\mathcal{M}[0,T]}}
\newcommand{\U}{{\mathcal{U_\tau}}}
\newcommand{\rli}{{L^\infty(Q)}}
\newtheorem{theorem}{Theorem}[section]
\newtheorem{lemma}[theorem]{Lemma}
\newtheorem{definition}[theorem]{Definition}
\newtheorem{proposition}[theorem]{Proposition}
\newtheorem{corollary}[theorem]{Corollary}
\newtheorem{remark}[theorem]{{\it Remark }\rm }
\newtheorem{example}[theorem]{{\it Example}\rm }
\title{Measure Control of a Semilinear Parabolic Equation with a Nonlocal Time Delay
\thanks{The first two authors were partially supported by the Spanish Ministerio de
Econom\'{\i}a y Competitividad under projects MTM2014-57531-P and MTM2017-83185-P.
The third author was supported by the collaborative research center
SFB 910, TU Berlin, project  B6.}}
\author{Eduardo Casas\thanks{Departmento de Matem\'{a}tica Aplicada y Ciencias de la Computaci\'{o}n, E.T.S.I. Industriales y de Telecomunicaci\'on, Universidad de Cantabria, 39005 Santander, Spain, {\tt eduardo.casas@unican.es}.}
\and Mariano Mateos\thanks{Departamento de Matem\'{a}ticas, Campus de Gij\'on, Universidad de Oviedo, 33203, Gij\'on, Spain, {\tt mmateos@uniovi.es}.}
\and Fredi Tr\"oltzsch\thanks{Institut f\"ur Mathematik, Technische Universit\"at Berlin, D-10623 Berlin,
Germany,  {\tt troeltzsch@math.tu-berlin.de.}}}
\begin{document}
\maketitle

\begin{abstract}
  We study a control problem governed by a semilinear parabolic equation. The control is a measure that acts as
  the kernel of a possibly nonlocal time delay term and the functional includes a non-differentiable term with
  the measure-norm of the control. Existence, uniqueness and regularity of the solution of the state equation, as
  well as differentiability properties of the control-to-state operator are obtained. Next, we provide first order
  optimality conditions for local solutions. Finally, the control space is suitably discretized and we prove convergence of
  the solutions of the discrete problems to the solutions of the original problem. Several numerical examples are
  included to illustrate the theoretical results.
\end{abstract}

\textbf{Keywords:} optimal control, parabolic equation, nonlocal time delay,
measure control

\textbf{AMS Subject classification: }49K20, 
35K58, 
 {49M25} 

\section{Introduction}
\label{S1}
\setcounter{equation}{0}

We consider optimal control problems for the parabolic equation
\begin{equation}
\left\{\begin{array}{rcll}\displaystyle\frac{\partial y}{\partial t} - \Delta y + R(y) &=
&\displaystyle \int_0^T y(x,t-s)\dus \ &\text{ in } Q = \Omega \times (0,T),\vspace{2mm}\\
\displaystyle \partial_ny &=& 0 \ &\text{ on } \Sigma = \Gamma \times (0,T),\\[1ex]
y(x,t) &=& y_0(x,t) \ &\text{ in }Q_- =  \Omega \times [-T,0], \end{array}\right.
\label{E1.1}
\end{equation}
where the Borel measure $u \in \MT$ is taken as control. Depending on the particular choice of this measure
and on the  form of the nonlinearity $R$, different mathematical models of interest for theoretical physics
are covered by this equation. Thanks to its generality, this equation includes the control of  time delays in
parabolic equations, the control of multiple time delays, and also the optimization of standard feedback operators of
Pyragas type. Associated examples will be explained below.

Our paper extends the optimization of nonlocal Pyragas type feedback operators that was investigated in \cite{nestler_schoell_troeltzsch2015}. The main novelty of our paper is the use of measures instead of functions. This is much more general and
leads to  new, partially delicate and interesting questions of analysis.
The partial differential equation above includes three main difficulties: First, the equation is of semilinear type.
Main ideas for the associated analysis were prepared 
{in} \cite{casas_ryll_troeltzsch2014} and we are able to
proceed similarly, at least partially. Second, the equation contains some kind of time delays.
Finally, the integral operator includes the measure $u$ that complicates the analysis.

The optimal control theory of ordinary or partial differential equations with time-delay has a very long history. Numerous papers were contributed to this field. We mention exemplarily the papers \cite{Banks_Burns1978,Banks_Burns_Cliff1981,Colonius_Hinrichsen1976,Kappel_Kunisch1981,Kunisch1981},
that have some relation to distributed parameter systems,  or the surveys \cite{Banks_Manitius1974,Richard2003}. More recent contributions are e.g. \cite{Jeong_Hwang2015, Mordukhovich_Wang2009,
Mordukhovich_Wang2010}.

However, to our best knowledge, the optimal
control of parabolic equations with nonlocal time delay was only investigated in
\cite{nestler_schoell_troeltzsch2015}. The case of measures as controls  is new for
this type of equations.
{However, we mention \cite{Ahmed2003}, where a measure-valued control function is considered in a delay equation.}

Moreover, the control is not taken as a right-hand side. Here, it plays the role of a kernel
in an integral operator; this is another difficulty. We should mention that the use of kernels as control functions
is not new. For instance, memory kernels were taken as ''controls'' in  identification problems in
\cite{unger_vonwolfersdorf1995} and \cite{vonwolfersdorf1993}.

The equation above generalizes different models of Pyragas type feedback that are very popular in theoretical
physics.
We mention the seminal paper by Pyragas \cite{pyragas1992}, where the feedback of the form \eqref{E1.2}  was
introduced to stabilize periodic orbits; see also \cite{pyragas2006}. We also refer to
\cite{schoell_schuster2008, siebert_alonso_baer_schoell14, siebert_schoell14},
where  nonlocal Pyragas  feedback operators of the type \eqref{E1.5} are discussed for different kernels $u$.

Let us also mention \cite{kyrychko_blyuss_schoell11}, where the implemention of nonlocal
feedback controllers is investigated.
In particular,  these equations have applications in Laser technology;
{we refer to associated contributions in \cite{schoell_schuster2008}.}

Let us also mention a few examples for the equation \eqref{E1.1}. In the following $\kappa$ is a real parameter.

\begin{example}[Pyragas feedback control]
If $\tau \in (0,T)$ is a fixed time, and $\delta_\tau$ and $\delta_0$ denote the Dirac measures concentrated at $\tau$ and $0$, respectively, then the equation
\begin{equation} \label{E1.2}
 \frac{\partial y}{\partial t} - \Delta y + R(y) =\kappa \left(y(x,t-\tau) - y(x,t)\right)
\end{equation}
is obtained as particular case of \eqref{E1.1} with $u = \kappa \, (\delta_\tau - \delta_0)$. Equations of this type with fixed time delay $\tau$ are known in the context
of the so-called  Pyragas type feedback control, \cite{pyragas1992,pyragas2006,schoell_schuster2008}.
\end{example}
\begin{example}[Pyragas feedback with multiple time delays]
A more general version of \eqref{E1.2} with multiple fixed time delays is
{generated} by
\begin{equation} \label{E1.3}
 u = \kappa\Big(\sum_{i=1}^m u_i \delta_{\tau_i} - \delta_0\Big)
\end{equation}
with fixed time delays $0 < \tau_1 < \ldots < \tau_m < T$. 
{Then the} equation
\begin{equation} \label{E1.4}
\frac{\partial y}{\partial t} - \Delta y + R(y) =\kappa \left( \sum_{i=1}^m u_i \, y(x,t-\tau_i) - y(x,t)\right)
\end{equation}
is obtained. Here, the control $(u_1, \ldots, u_m) \in \mathbb{R}^m$  is a vector of controllable weights.
\end{example}
\begin{example}[Nonlocal Pyragas feedback control]
Finally, if the Lebesgue decomposition of $u$ is $u=\kappa( u_r-\delta_0)$, where $u_r$ is absolutely continuous w.r.t. the Lebesgue measure in $[0,T]$ and its Radon-Nikodym derivative is $g \in L^1(0,T)$,  then equation \eqref{E1.1} takes the form
\begin{equation} \label{E1.5}
\frac{\partial y}{\partial t} - \Delta y + R(y) =\kappa \left(\int_0^T g(s)y(x,t-s)\, ds - y(x,t)\right)
\end{equation}
that is used in nonlocal Pyragas type feedback. Here, the control is the  integrable
kernel $g$ of the integral operator of the partial differential equation.
\end{example}

\section{State Equation}
\label{S2}
\setcounter{equation}{0}

Throughout this paper $\MT$ will denote the space of real and regular Borel measures in $[0,T]$. According to the Riesz representation theorem, $\MT$ is the dual of the space of continuous functions in $[0,T]$: $\MT = \CT^*$, and $\MT$ is a Banach space endowed with the norm
\[
\|u\|_\MT = |u|([0,T]) = \sup\left\{\int_0^T\phi(s)\dus : \phi \in \CT \text{ and } \|\phi\|_\CT \le 1\right\}
\]
for any real numbers $a \le b$.
Here, $|u|$ denotes the total variation measure of $u$; see \cite[pp.~130--133]{Rudin70}. The above integrals are considered in the closed interval $[0,T]$. Notice that $u(\{0\})$ and $u(\{T\})$ could be nonzero. This notational convention will be maintained in the sequel. Thus, we distinguish
\[
\int_a^b\phi(s)\dus = \int_{[a,b]}\phi(s)\dus,\ \int_{[a,b)}\phi(s)\dus,\ \int_{(a,b]}\phi(s)\dus,\ \int_{(a,b)}\phi(s)\dus,
\]
for real numbers $a \le b$.

We recall our general state equation \eqref{E1.1},
\[
\left\{\begin{array}{rcll}\displaystyle\frac{\partial y}{\partial t} - \Delta y + R(y) &=&\displaystyle \int_0^T y(x,t-s)\dus \ &\text{ in } Q,\vspace{2mm}\\ \displaystyle \partial_ny &=& 0 \ &\text{ on } \Sigma,\\[1ex] y(x,t) &=& y_0(x,t) \ &\text{ in }Q_-. \end{array}\right.
\]
In this setting, $\Omega \subset \mathbb{R}^n$, $1 \le n \le 3$, is a bounded Lipschitz domain with boundary $\Gamma$ and,
(as introduced above) $Q_- = \Omega \times [-T,0]$.
 The nonlinearity $R: \mathbb{R} \longrightarrow \mathbb{R}$ is a function of class $C^1$ such that
\begin{equation}
\exists C_R \in \mathbb{R} : R'(y) \ge C_R \ \ \  \forall y \in \mathbb{R}.
\label{E2.1}
\end{equation}
The initial datum $y_0$ is taken from $C(\bar Q_-)$, while $u \in \MT$ is the control.

{
In particular, third order polynomials of the form
\[
R(y) = \rho (y - y_1)(y - y_2)(y - y_3)
\]
with $\rho > 0$ and $y_1 < y_2 < y_3$ satisfy this assumption. $R$ has the meaning of a reaction term. The numbers $y_i, \, i=1,2,3$, are the  fixed points of the reaction; $y_1$ and $y_3$ are the stable ones, while $y_2$ is unstable. Such functions $R$ play a role in bistable reactions of physical chemistry; see \cite{Lober2014}.

{The assumptions are also fulfilled by higher order polynomials of odd order
\[
R(y) = \sum_{i=0}^k a_i \, y^i,
\]
with real numbers $a_i$, $i = 0,\ldots,k$, and $k = 2 \ell + 1, \, \ell \in \mathbb{N}\cup \{0\}$, if $a_k$ is positive.
Here, the derivative $R'$ is an even order polynomial that satisfies condition \eqref{E2.1}.

\begin{remark}The theory of our paper can be extended to more general functions $R: \Omega \times \mathbb{R} \to \mathbb{R}$, that obey the following assumptions:
\begin{itemize}
\item $R$ is a Carath\'eodory function of class $C^1$ with respect to the second variable.
\item There exists some $p > n/2$ such that $R(\cdot,0) \in L^p(\Omega)$.
\item For all $M > 0$ there exists a constant $C_M > 0$ such that
\[
\Big|\frac{\partial R}{\partial y}(x,y)\Big| \le C_M\ \mbox { for a.a.}\ x \in \Omega \mbox{ and } \forall y \in \mathbb{R}  \mbox{ with } |y| \le M.
\]
\item The function $y \mapsto \frac{\partial R}{\partial y}(x,y)$ is  bounded from below, i.e.
\[
\frac{\partial R}{\partial y}(x,y) \ge C_R \quad \mbox{ for a.a. } x \in \Omega \mbox{ and all } y \in \mathbb{R}.
\]
\end{itemize}
The theory remains also true, if - in addition to these general assumptions on $R$ -
the Laplace operator $-\Delta$ is replaced by another uniformly elliptic differential operator $A$ with $L^\infty$ coefficients in the main part of the operator. Lipschitz regularity of these coefficients is required only in the second part of Theorem \ref{T2.2}.

However, to keep the presentation simple, we concentrate on the case $A = -\Delta$, and a function $R: \mathbb{R} \to \mathbb{R}$ of class $C^1$ and satisfying condition \eqref{E2.1}.
\label{R2.1}
\end{remark}

In the sequel, we will denote $Y = L^2(0,T;H^1(\Omega)) \cap C(\bar Q)$. Endowed with the norm
\[
\|y\|_Y = \|y\|_{L^2(0,T;H^1(\Omega))} + \|y\|_{C(\bar Q)}
\]
$Y$ is a Banach space.

We begin our analysis with the well-posedness of the state equation \eqref{E1.1} that is a differential equation with time delay. Ordinary differential delay equations are well understood, we  refer exemplarily to the expositions \cite{Bellman1954,Hale_Verduyn1993,Erneux2009}. For parabolic partial differential equations,
we only mention \cite{Bainov_Mishev1991} and the references cited therein, since this book investigates
oscillation effects for nonlinear partial differential equations with delay that we observe also for \eqref{E1.1}.
Our parabolic delay equation is nonlinear and contains a nonlocal Pyragas type feedback term defined by a measure. To our best knowledge, an associated result on existence and uniqueness of a solution is not yet known.
\begin{theorem}
For every $u \in \MT$, problem \eqref{E1.1} has a unique solution $y_u \in Y$. Moreover, the estimates
\begin{align}
& \|y_u\|_{L^2(0,T;H^1(\Omega))} \le C_{1,0}\big(\|y_0\|_{L^2(Q_-)}\|u\|_\MT + \|y_0(\cdot,0)\|_{L^2(\Omega)} + |R(0)|\big),\label{E2.2}\\
&\|y_u\|_{C(\bar Q)} \le C_\infty\big(\|y_0\|_{C(\bar Q_-)}\|u\|_\MT +  \|y_0(\cdot,0)\|_{C(\bar\Omega)} + |R(0)|\big),\label{E2.3}
\end{align}
are satisfied, where the constants $C_{1,0}$ and $C_\infty$ depend on $\|u\|_\MT$, but they can be taken fixed on bounded subsets of $\MT$.
\label{T2.1}
\end{theorem}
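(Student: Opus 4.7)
The plan is to establish existence and uniqueness by a step-by-step Banach fixed point argument on short time intervals, and then to derive the two estimates by iterating the standard $L^\infty$- and energy estimates for the semilinear parabolic problem over the same intervals. For any $z \in C(\bar Q)$ with $z(\cdot,0) = y_0(\cdot,0)$, let $\tilde z$ denote its continuous extension to $\Omega \times [-T,T]$ that equals $y_0$ on $\bar Q_-$, and set
\[
(Lz)(x,t) := \int_0^T \tilde z(x, t-s)\, \dus, \qquad (x,t) \in Q.
\]
The pointwise bound $|(Lz)(x,t)| \le (\|z\|_{C(\bar Q)} + \|y_0\|_{C(\bar Q_-)})\|u\|_\MT$ shows that $Lz \in L^\infty(Q)$, so the semilinear Neumann problem with right-hand side $Lz$ admits a unique solution in $Y$ by the classical theory of semilinear parabolic equations with bounded source (as used in \cite{casas_ryll_troeltzsch2014}, the assumption $R'\ge C_R$ being the key monotonicity ingredient). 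This defines the solution map $\mathcal{F}\colon z \mapsto y$.

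Fix $s_0 \in [0,T)$, assume $y$ is already known on $[-T, s_0]$, and restrict $\mathcal{F}$ to the affine set of continuous functions on $\bar\Omega \times [s_0, s_0+\tau]$ that match this past at $t=s_0$. For two inputs $z_1, z_2$, the past contributions cancel in $L(z_1-z_2)$, so $w = \mathcal{F}(z_1)-\mathcal{F}(z_2)$ vanishes at $t=s_0$ and satisfies
\[
w_t - \Delta w + c(x,t)\, w = \int_{[0,\, t-s_0]} (z_1 - z_2)(x,t-s)\, \dus,
\]
with $c(x,t) = \int_0^1 R'(\lambda y_1+(1-\lambda)y_2)\, d\lambda \ge C_R$. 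After the standard exponential substitution that absorbs $c$ into a non-negative zeroth-order coefficient, the Neumann parabolic maximum principle yields
\[
\|w\|_{C(\bar\Omega\times[s_0,s_0+\tau])} \le \tau\, e^{K\tau}\, \|u\|_\MT\, \|z_1-z_2\|_{C(\bar\Omega\times[s_0,s_0+\tau])},
\]
with $K=\max\{0,-C_R\}$. Choosing $\tau$ so small that $\tau e^{K\tau}\|u\|_\MT \le 1/2$ makes $\mathcal{F}$ a contraction; because this threshold depends only on $\|u\|_\MT$, the same $\tau$ works uniformly in $s_0$.

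Starting from $s_0=0$ with past $y_0$, Banach's theorem yields a unique fixed point on $[0,\tau]$, and iterating with the same $\tau$ covers $[0,T]$ in $N=\lceil T/\tau\rceil$ steps, gluing by uniqueness into a single $y_u\in Y$. For \eqref{E2.3}, on each subinterval the standard $L^\infty$-estimate gives
\[
\|y_u\|_{C(\bar\Omega\times[k\tau,(k+1)\tau])} \le C\bigl(\|y_u(\cdot,k\tau)\|_{C(\bar\Omega)} + \|Ly_u\|_{L^\infty} + |R(0)|\bigr),
\]
which combined with $\|Ly_u\|_{L^\infty(\Omega\times[k\tau,(k+1)\tau])} \le (\|y_u\|_{C(\bar\Omega\times[0,(k+1)\tau])} + \|y_0\|_{C(\bar Q_-)})\|u\|_\MT$ closes into \eqref{E2.3} through a finite discrete Gr\"onwall recursion in $k$. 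For \eqref{E2.2}, test the equation with $y_u$, exploit $R(y)y\ge C_R y^2 + R(0)y$ together with the Cauchy--Schwarz-type bound
\[
\|Ly_u\|_{L^2(Q)} \le \|u\|_\MT\bigl(\|y_u\|_{L^2(Q)} + \|y_0\|_{L^2(Q_-)}\bigr)
\]
obtained by swapping the order of integration between $Q$ and $[0,T]$, and close by Gr\"onwall.

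The main obstacle is the possible atom of $u$ at $s=0$, which creates an instantaneous self-coupling $u(\{0\})y(x,t)$ on the right-hand side that is absent from classical delay equations. The above construction absorbs it automatically because the local Lipschitz constant carries a factor of $\tau$ that dominates the atomic mass; alternatively one may split $u = u(\{0\})\delta_0 + \tilde u$ with $\tilde u(\{0\})=0$ and shift the instantaneous term into the reaction via $\hat R(y) = R(y) - u(\{0\})y$, which still satisfies the one-sided bound \eqref{E2.1} with constant $C_R - |u(\{0\})|$.
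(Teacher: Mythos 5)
Your existence--uniqueness argument is correct, but it follows a genuinely different route from the paper. The paper performs the substitution $y^\lambda=\mathrm{e}^{-\lambda t}y$, absorbs the atom of $u$ at $s=0$ into the monotone reaction $R^+_\lambda$, shows in Lemma \ref{L2.1} that the remaining operator $K_\lambda^+[u]$ and the datum $g_{\lambda,u}$ become small in norm for $\lambda$ large (estimate \eqref{E2.7}), and then runs a single global-in-time Schauder fixed point; this forces a compactness step via H\"older regularity and hence an extra approximation of $y_0(\cdot,0)$ by H\"older functions. You instead use a time-stepping Banach contraction on intervals of length $\tau$, where the Lipschitz constant $\tau\,\mathrm{e}^{K\tau}\|u\|_\MT$ gains the factor $\tau$ from time integration (comparison principle), so the full mass of $u$ --- including the atom at $0$ and any mass accumulating near $0$ --- is harmless; this avoids Schauder, the compactness issue and the approximation of the initial datum, and the step size depends only on $\|u\|_\MT$ and $C_R$, so induction over the subintervals also gives uniqueness directly. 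Both proofs lean on the same classical solvability of the semilinear problem with bounded right-hand side, so your construction is legitimate and arguably more elementary; what the paper's $\lambda$-weighting buys is one global argument plus the quantitative smallness \eqref{E2.7}, which is reused later (uniqueness step, Lemma \ref{L3.1}, Proposition \ref{P3.1}).

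One step, as written, would fail: your recursion for \eqref{E2.3}. The displayed subinterval bound $\|y_u\|_{C(\bar\Omega\times[k\tau,(k+1)\tau])}\le C(\|y_u(\cdot,k\tau)\|_{C(\bar\Omega)}+\|Ly_u\|_{L^\infty}+|R(0)|)$, combined with $\|Ly_u\|_{L^\infty}\le \|u\|_\MT(\|y_u\|_{C(\bar\Omega\times[0,(k+1)\tau])}+\|y_0\|_{C(\bar Q_-)})$, places the unknown sup-norm over the \emph{current} interval on the right-hand side with the factor $C\|u\|_\MT$, which need not be smaller than one, so the inequality cannot be closed by a discrete Gr\"onwall argument --- this self-coupling with an $O(\|u\|_\MT)$ coefficient is precisely the difficulty the paper removes via \eqref{E2.7} and the absorption of $u(\{0\})y$ into $R^+_\lambda$. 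The cure is the device you already use for the contraction: the comparison-principle estimate with the time-integration factor, namely $\|y_u(\cdot,t)\|_{C(\bar\Omega)}\le \mathrm{e}^{K(t-k\tau)}\big(\|y_u(\cdot,k\tau)\|_{C(\bar\Omega)}+(t-k\tau)(\|Ly_u\|_{L^\infty(\Omega\times(k\tau,t))}+|R(0)|)\big)$; with $\tau\,\mathrm{e}^{K\tau}\|u\|_\MT\le 1/2$ the current interval is absorbed and the recursion over $k\le T/\tau$ yields \eqref{E2.3} with a constant depending only on $\|u\|_\MT$, $C_R$ and $T$, which is all the theorem requires. Similarly, for \eqref{E2.2} you should use the time-truncated bound $\|Ly_u\|_{L^2(\Omega\times(0,t))}\le\|u\|_\MT(\|y_u\|_{L^2(\Omega\times(0,t))}+\|y_0\|_{L^2(Q_-)})$ so that Gr\"onwall applies; with these repairs your estimates deliver the stated constants, fixed on bounded subsets of $\MT$.
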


In order to prove this theorem, we perform the classical substitution $y^\lambda(x,t) = \text{e}^{-\lambda t}y(x,t)$ with   arbitrary $\lambda > 0$. Hence equation \eqref{E1.1} is transformed to
\[
\left\{\begin{array}{rcll}\displaystyle\frac{\partial y^\lambda}{\partial t} - \Delta y^\lambda + \text{e}^{-\lambda t}R(\text{e}^{\lambda t}y^\lambda)  + \lambda y^\lambda&=&\displaystyle \int_0^T \text{e}^{-\lambda s}y^\lambda(x,t-s)\dus \ &\text{ in } Q,\vspace{2mm}\\ \displaystyle \partial_ny^\lambda &=& 0 \ &\text{ on } \Sigma ,\\[1ex] y^\lambda(x,t) &=&\text{e}^{-\lambda t}y_0(x,t) \ &\text{ in }Q_-. \end{array}\right.
\]
To simplify the notation we introduce, for every $\lambda \ge 0$, the following family of operators $K_\lambda[u], K_\lambda^+[u]:C(\bar Q) \longrightarrow \rli$, defined for 
{$(x,t) \in \bar Q$} by
\begin{align*}
&(K_\lambda[u]z)(x,t) = \int_{[0,t)}\text{e}^{-\lambda s}z(x,t-s)\dus,\\
&(K_\lambda^+[u]z)(x,t) = \int_{(0,t)}\text{e}^{-\lambda s}z(x,t-s)\dus.
\end{align*}
Hence, we have $(K_\lambda[u]z)(x,t) = u(\{0\})z(x,t) + (K_\lambda^+[u]z)(x,t)$.

Moreover, we define the family of functions
\[
g_{\lambda,u}(x,t) = \int_t^T\text{e}^{-\lambda s}y_0(x,t-s)\dus\ \ \text{ for } (x,t) \in \bar Q
\]
that covers the initial data $y_0$.

For $\lambda = 0$ we simply write $K[u]$ and $g_u$ instead of $K_0[u]$ and $g_{0,u}$, respectively. With this notation, the above equation and \eqref{E1.1} (obtained for $\lambda = 0$) can be formulated as follows
\begin{equation}
\left\{\begin{array}{rcll}\displaystyle\frac{\partial y^\lambda}{\partial t} - \Delta y^\lambda + \text{e}^{-\lambda t}R(\text{e}^{\lambda t}y^\lambda)  + \lambda y^\lambda&=&\displaystyle K_\lambda[u]y^\lambda + g_{\lambda,u} \ &\text{ in } Q,\vspace{2mm}\\ \displaystyle \partial_ny^\lambda &=& 0 \ &\text{ on } \Sigma ,\\[1ex] y^\lambda(x,0) &=&y_0(x,0) \ &\text{ in }\Omega, \end{array}\right.
\label{E2.4}
\end{equation}
with the additional extension $y^\lambda(x,t) = y_0(x,t)$ for $t \in [-T,0]$.

{
\begin{remark}
Notice that $K_{\lambda}[u] y$ and $g_{\lambda,u}$ can be discontinuous at those points $t$ such that $u(\{t\})\neq 0$, but the following identity holds
\[
\begin{aligned}
\int_0^T e^{-\lambda s}y^\lambda(x,t-s)du(s) &= \int_{[0,t)} e^{-\lambda s}y^\lambda(x,t-s)du(s) + \int_t^T e^{-\lambda s}y_0(x,t-s)du(s) \\
&= (K_\lambda[u]y^\lambda + g_{\lambda,u})(x,t),
\end{aligned}
\]
and, therefore, $K_{\lambda}[u] y + g_{\lambda,u}\in C(\bar Q)$.
\label{R2.2}
\end{remark}}

\begin{lemma}
For every $u \in \MT$ and $\lambda > 0$ we have
\begin{align}
&\left\{\begin{array}{lll}\displaystyle \|K_\lambda[u]z\|_{L^2(Q)} &\le \|z\|_{L^2(Q)}\|u\|_\MT& \forall z \in C(\bar Q),\\
\displaystyle \|K_\lambda[u]z\|_{
{L^\infty((0,T)},L^2(\Omega))} &\le  \|z\|_{C([0,T],L^2(\Omega))}\|u\|_\MT& \forall z \in C(\bar Q),\\
\displaystyle|K_\lambda[u]z\|_{\rli} &\le  \|z\|_{C(\bar Q)}\|u\|_\MT& \forall z \in C(\bar Q),\end{array}\right.\label{E2.5}\\
&\left\{\begin{array}{ll}\|g_{\lambda,u}\|_{L^2(Q)} &\le \|y_0\|_{L^2(Q_-)}\|u\|_\MT,\\
\|g_{\lambda,u}\|_{\rli} &\le \|y_0\|_{C(\bar Q_-)}\|u\|_\MT.\end{array}\right.
\label{E2.6}
\end{align}
Moreover, for every $\varepsilon > 0$ and $u \in \MT$ there exists $\lambda_{\varepsilon,u} > 0$ such that $\forall \lambda \ge \lambda_{\varepsilon,u}$ the following inequalities hold
\begin{equation}
\left\{\begin{array}{ll}\displaystyle \|K_\lambda^+[u]z\|_{L^2(Q)} &\le  \varepsilon\big(1 + \|u\|_\MT\big)\|z\|_{L^2(Q)}\quad \forall z \in C(\bar Q),\\ \displaystyle \|K_\lambda^+[u]z\|_{\rli} &\le  \varepsilon\big(1 + \|u\|_\MT\big)\|z\|_{C(\bar Q)}\quad \ \forall z \in C(\bar Q),\\
\displaystyle \|g_{\lambda,u}\|_{\rli} &\le \varepsilon\big(1 + \|u\|_\MT\big)\|y_0\|_{C(\bar Q_-)}.\end{array}\right.
\label{E2.7}
\end{equation}
\label{L2.1}
\end{lemma}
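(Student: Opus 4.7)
My plan is to handle the two blocks of estimates separately. The bounds \eqref{E2.5}--\eqref{E2.6} follow from direct pointwise estimates combined with Fubini--Tonelli. For the $L^\infty$-type bounds I would use only that $\text{e}^{-\lambda s}\le 1$ for $s,\lambda\ge 0$ together with the definition of the total variation norm, so
\[
|K_\lambda[u]z(x,t)| \le \int_{[0,t)} |z(x,t-s)|\,\textup{d}|u|(s) \le \|z\|_{C(\bar Q)}\,\|u\|_\MT,
\]
and analogously for $g_{\lambda,u}$; the bound of $K_\lambda[u]z$ in $L^\infty((0,T);L^2(\Omega))$ then comes from Minkowski's integral inequality with respect to $|u|$. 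For the $L^2(Q)$-estimates I apply Cauchy--Schwarz with respect to the finite measure $|u|$,
\[
|K_\lambda[u]z(x,t)|^2 \le \|u\|_\MT\int_{[0,t)} |z(x,t-s)|^2\,\textup{d}|u|(s),
\]
integrate over $Q$, swap the order via Fubini--Tonelli and substitute $\tau=t-s$ to conclude; the same recipe gives the $L^2$-bound on $g_{\lambda,u}$.

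The content of the lemma lies in \eqref{E2.7}. The key structural observation is that neither the integration set $(0,t)$ of $K_\lambda^+[u]z$ nor the set $[t,T]$ of $g_{\lambda,u}$ (for a.e.\ $t>0$, which is all that matters for the $L^\infty(Q)$-norm since $Q=\Omega\times(0,T)$) contains the point $s=0$, so no atom of $|u|$ at the origin enters the estimate. By continuity from above of the finite measure $|u|$ we have $|u|((0,\delta])\downarrow 0$ as $\delta\to 0^+$; hence for every $\varepsilon>0$ I can fix $\delta=\delta(\varepsilon,u)>0$ with $|u|((0,\delta])<\varepsilon$. I then split the integration into a near-zero part on $(0,\delta)$ (respectively $[t,\delta]$ for $g_{\lambda,u}$) and a remainder on $[\delta,t)$ (respectively $(\delta,T]$). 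On the near-zero part smallness of $|u|$ controls the integrand, while on the remainder $\text{e}^{-\lambda s}\le\text{e}^{-\lambda\delta}$, which is at most $\varepsilon$ for $\lambda\ge\lambda_{\varepsilon,u}:=\delta^{-1}\ln(1/\varepsilon)$. Combining the two contributions produces the target factor $\varepsilon(1+\|u\|_\MT)$.

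The main technical step is the $L^2(Q)$-estimate of $K_\lambda^+[u]z$: after splitting and applying Cauchy--Schwarz on the near-zero piece, one obtains a factor $|u|((0,\delta))^2\le\varepsilon^2$ in the squared norm, hence an $\varepsilon$ outside, and on the far piece a factor $\|u\|_\MT^2\text{e}^{-2\lambda\delta}\le\varepsilon^2\|u\|_\MT^2$; summing and taking square roots delivers exactly $\varepsilon(1+\|u\|_\MT)\|z\|_{L^2(Q)}$. For the bound on $g_{\lambda,u}$ the fact that $Q$ is the open cylinder is essential: the essential supremum automatically ignores the slice $t=0$, where an atom of $u$ at $0$ could otherwise prevent the bound from vanishing as $\lambda\to\infty$, while for any $t>0$ the set $[t,T]$ excludes $s=0$ and the same splitting argument applies.
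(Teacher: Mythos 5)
Your proposal is correct and follows essentially the same route as the paper: Cauchy--Schwarz with respect to $|u|$ plus Fubini and the substitution $\sigma=t-s$ for the first block, and for \eqref{E2.7} a splitting of the integration domain near $s=0$ that exploits $|u|((0,\delta))\to 0$ together with the decay of $\text{e}^{-\lambda s}$ away from the origin, which is exactly the paper's estimate of $I_2^+$ (the paper merely splits at the $\lambda$-dependent point $1/\sqrt{\lambda}$ instead of a fixed $\delta(\varepsilon)$, and uses Cauchy--Schwarz where you invoke Minkowski's integral inequality). Your remark that the bound on $g_{\lambda,u}$ only needs $t>0$, so that the atom of $u$ at $s=0$ never enters, is precisely the point implicit in the paper's restriction of $I_1^+$ to $(0,T]$.
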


\begin{proof}
By using the Schwarz inequality and the Fubini theorem, we get
\begin{align*}
&\|K_\lambda[u]z\|_{L^2(Q)}^2 = \int_Q\left(\int_{[0,t)}\text{e}^{-\lambda s}z(x,t-s)\dus\right)^2\dx\dt\\
& \le \int_Q\left(\int_{[0,t)}z^2(x,t-s)\duas\right)\left(\int_{[0,t)}\text{e}^{-2\lambda s}\duas\right)\dx\dt\\
&\le \left(\int_\Omega \int_0^T\int_s^Tz^2(x,t-s)\dt\duas\dx\right)\left(\int_0^T\text{e}^{-2\lambda s}\duas\right) = I_1 I_2.
\end{align*}
Substituting $\sigma = t - s$ we get for $I_1$
\begin{align*}
&I_1 = \int_\Omega \int_0^T\int_0^{T-s}z^2(x,\sigma)\, d\sigma\duas\dx \le \int_\Omega \int_0^T\int_0^{T}z^2(x,\sigma)\, d\sigma\duas\dx\\
& = \|z\|^2_{L^2(Q)}\|u\|_\MT.
\end{align*}

To estimate $I_2$ we proceed as follows
\begin{align*}
&I_2 = \int_0^T\text{e}^{-2\lambda s}\duas \le \int_0^T\duas =\|u\|_\MT.
\end{align*}
Multiplying the estimates for $I_1$ and $I_2$, we get the first inequality of \eqref{E2.5}. To prove the second estimate we proceed as follows: $\forall t \in [0,T]$
\begin{align*}
&\|(K_\lambda[u]z)(\cdot,t)\|_{L^2(\Omega)}^2 = \int_\Omega\left(\int_{[0,t)}\text{e}^{-\lambda s}z(x,t-s)\dus\right)^2\dx\dt\\
&\le \left(\int_\Omega\int_{[0,t)}z^2(x,t-s)\duas\dx\right)\left(\int_0^T\text{e}^{-2\lambda s}\duas\right)\\
& = \left(\int_{[0,t)}\|z(\cdot,t-s)\|_{L^2(\Omega)}^2\duas\right)I_2 \le \|z\|^2_{C([0,T],L^2(\Omega))}\|u\|_\MT^2.
\end{align*}

Finally, to prove the third inequality of \eqref{E2.5} we only need the following estimation
\[
\|K_\lambda[u]z\|_{\rli} \le \|z\|_{C(\bar Q)}I_2 \le \|z\|_{C(\bar Q)}\|u\|_\MT.
\]
In a completely analogous way we prove \eqref{E2.6}. To establish the first two inequalities of \eqref{E2.7} we proceed exactly as above replacing $I_2$ by $I_2^+$
\begin{align}
& I_2^+ = \int_{(0,T]}\text{e}^{-2\lambda s}\duas \le \int_{(0,\frac{1}{\sqrt{\lambda}})}\text{e}^{-2\lambda s}\duas + \int_{\frac{1}{\sqrt{\lambda}}}^T\text{e}^{-2\lambda s}\duas\notag\\
& \le |u|\big(0,\frac{1}{\sqrt{\lambda}}\big) + \text{e}^{-2\sqrt{\lambda}}\|u\|_\MT.\label{E2.8}
\end{align}
It is enough to choose $\lambda_{\varepsilon,u} > 0$ sufficiently large such that
\[
|u|\big(0,\frac{1}{\sqrt{\lambda_{\varepsilon,u}}}\big) + \text{e}^{-\sqrt{\lambda_{\varepsilon,u}}} < \varepsilon
\]
holds to conclude the desired inequalities. Finally, we prove the last inequality of \eqref{E2.7}. To this end, we observe that
\[
|g_{\lambda,u}(x,t)| \le \int_{(0,T]}\text{e}^{-\lambda s}|y_0(x,t-s)|\, \duas \le I_1^+\|y_0\|_{C(\bar Q_-)}\ \ \forall (x,t) \in Q,
\]
where
\[
I_1^+ = \int_{(0,T]}\text{e}^{-\lambda s}\, \duas \le |u|\big(0,\frac{1}{\sqrt{\lambda}}\big) + \text{e}^{-\sqrt{\lambda}}\|u\|_\MT \le \varepsilon\big(1 + \|u\|_\MT\big).
\]
Combining this estimate with \eqref{E2.8} we deduce the desired inequality.
\end{proof}

{\em Proof of Theorem \ref{T2.1}.}  We split the proof into three steps.

{\em I - Existence of a solution.} For every function $z \in C(\bar Q)$, we define the problem
\begin{equation}
\left\{\begin{array}{rcll}\displaystyle\frac{\partial y}{\partial t} - \Delta y + R^+_\lambda(t,y) &=&K_\lambda^+[u]z + g_{\lambda,u} \ &\text{ in } Q,\vspace{2mm}\\ \displaystyle \partial_ny &=& 0 \ &\text{ on } \Sigma ,\\[1ex] y(x,0) &=&y_0(x,0) \ &\text{ in }\Omega, \end{array}\right.
\label{E2.9}
\end{equation}
where $R^+_\lambda(t,y) = \text{e}^{-\lambda t}R(\text{e}^{\lambda t}y) + (\lambda - u(\{0\}))y$. This is a standard semilinear parabolic equation with given right hand side. We have that $\frac{\partial R^+_\lambda}{\partial y}(t,y) \ge C_R + \lambda - u(\{0\})$ and we set $\lambda_R^+ = 1 - \min\{0,C_R - u(\{0\})\}$,  hence it follows that
\begin{equation} \label{{E2.10}}
\frac{\partial R_\lambda^+}{\partial y}(t,y) \ge 1 \quad \forall \lambda \ge \lambda_R^+.
\end{equation}

Therefore, $R_\lambda^+$ is a continuous monotone increasing function with respect to $y$, and it is well known that the semilinear equation \eqref{E2.9} has a unique solution $y \in Y$; see \cite{Casas97} or \cite{Troltzsch2010}, for instance. The continuity is due to the continuity of $y_0(\cdot,0)$ and the fact that the right hand side of the partial differential equation in \eqref{E2.9} belongs to $\rli$. Moreover from the above references and the equality $R_\lambda^+(t,0) = R(0)$ we know the estimate
\begin{equation}
\|y\|_{C(\bar Q)} \le C_0\Big(\|K_\lambda^+[u]z\|_{\rli} + \|g_{\lambda,u}\|_{\rli} +  \|y_0(\cdot,0)\|_{C(\bar\Omega)} +  |R(0)|\Big).
\label{E2.11}
\end{equation}

Now, we define $M = C_0(1 +  \|y_0(\cdot,0)\|_{C(\bar\Omega)} + |R(0)|)$. According to \eqref{E2.7}, we can select $\lambda \ge \lambda_R^+$ such that
\begin{equation}
\|K_\lambda^+[u]z\|_{\rli} + \|g_{\lambda,u}\|_{\rli} \le 1\ \ \forall z \in C(\bar Q)\ \text{ with } \|z\|_{C(\bar Q)} \le M.
\label{E2.12}
\end{equation}
Let $B_M$ be the closed ball of $C(\bar Q)$ with center at $0$ and radius $M$. We define the continuous mapping $F:B_M \longrightarrow B_M$ that associates to every $z \in B_M$ the solution $y = F(z)$ of \eqref{E2.9}. The embedding $F(B_M) \subset B_M$ is an immediate consequence of \eqref{E2.11}, \eqref{E2.12} and the definition of $M$. In order to apply Schauder's fixed point  theorem, we have to prove that $F(B_M)$ is relatively compact in $C(\bar Q)$. To this end,  we assume first that $y_0(\cdot,0)$ is a H\"older function in $\bar \Omega$: $y_0(\cdot, 0) \in C^{0,\mu}(\bar\Omega)$ with $\mu \in (0,1)$. Then, there exists $\mu_0 \in (0,\mu]$ and $C_\mu$ such that $y \in C^{0,\mu_0}(\bar Q)$ and
\[
\|y\|_{C^{0,\mu_0}(\bar Q)} \le C_{\mu}\Big(\|K_\lambda^+[u]z + g_{\lambda,u}\|_{\rli} +  \|y_0(\cdot,0)\|_{C^{0,\mu}(\bar\Omega)}+ \max_{|\rho|\le M} |R(\rho)| + 1\Big);
\]
see \cite[\S III-10]{Lad-Sol-Ura68}. From the compactness of the embedding $C^{0,\mu_0}(\bar Q) \subset C(\bar Q)$ and the above estimate we conclude that $F(B_M)$ is relatively compact in $C(\bar Q)$ and $F$ has at least one fixed point $y^\lambda$. Then it is obvious that $y^\lambda$ is a solution of  \eqref{E2.4} and $y^\lambda \in Y$.

Next we skip the assumption $y_0(\cdot,0) \in C^{0,\mu}(\bar\Omega)$ for every sufficiently large $\lambda$. Since $y_0 \in C(\bar\Omega)$, then we can take a sequence $\{y_{0k}\}_{k = 1}^\infty \subset C^{0,\mu}(\bar\Omega)$ such that $y_{0k} \to y_0(\cdot,0)$ in $C(\bar\Omega)$. Hence, for every $k \ge 1$, \eqref{E2.4} has at least one solution $y^\lambda_k \in Y$. Let us prove that $\{y^\lambda_k\}_{k = 1}^\infty$ is a Cauchy sequence in $Y$. To this end we select two terms of the sequence $y^\lambda_k$ and $y^\lambda_m$ and subtract the equations satisfied by them. Then we get
\[
\left\{\begin{array}{rcll}\displaystyle\frac{\partial (y^\lambda_k - y^\lambda_m)}{\partial t} - \Delta (y^\lambda_k - y^\lambda_m) + \frac{\partial R_\lambda^+}{\partial y}(t,y^\lambda_{km})(y^\lambda_k - y^\lambda_m) &=&K_\lambda^+[u](y^\lambda_k - y^\lambda_m)&\text{ in } Q,\vspace{2mm}\\ \displaystyle \partial_n(y^\lambda_k - y^\lambda_m) &=& 0 \ &\text{ on } \Sigma ,\\[1ex] (y^\lambda_k - y^\lambda_m)(x,0) &=&y_{0k}(x) - y_{0m}(x) &\text{ in }\Omega, \end{array}\right.
\]
where $y^\lambda_{km} = y^\lambda_k + \theta_{k}(y^\lambda_k - y^\lambda_m)$ and $0 \le \theta_{km}(x,t) \le 1$ is a measurable function. Now, multiplying the above equation by $y^\lambda_k - y^\lambda_m$, using that $\frac{\partial R_\lambda}{\partial y}(t,y^\lambda_{km}) \ge 1$ due to our choice of $\lambda$ (cf. \eqref{{E2.10}}), and \eqref{E2.7}, we obtain
\begin{align*}
&\frac{1}{2}\|(y^\lambda_k - y^\lambda_m)(T)\|_{L^2(\Omega)} + \int_Q|\nabla(y^\lambda_k - y^\lambda_m)|^2\dx\dt + \int_Q|y^\lambda_k - y^\lambda_m|^2\dx\dt\\
& \le \varepsilon\big(1 + \|u\|_\MT\big)\|y^\lambda_k - y^\lambda_m\|^2_{L^2(Q)} + \frac{1}{2}\|y_{0k} - y_{0m}\|_{L^2(\Omega)}^2\\
& \le C\varepsilon\big(1 + \|u\|_\MT\big)\|y^\lambda_k - y^\lambda_m\|^2_{L^2(0,T;H^1(\Omega))} + \frac{1}{2}\|y_{0k} - y_{0m}\|_{L^2(\Omega)}^2.
\end{align*}
For all $\varepsilon$ sufficiently small and $\lambda \ge \lambda_{\varepsilon,u}$, this leads to
\[
\|y^\lambda_k - y^\lambda_m\|_{L^2(0,T;H^1(\Omega))} \le C_1\|y_{0k} - y_{0m}\|_{L^2(\Omega)}.
\]
Notice that the left hand side of the above chain of inequalities absorbs the term appearing with the factor $C\varepsilon\big(1 + \|u\|_\MT\big)$ in the right hand side, if $\varepsilon$ is small enough.

Hence $\{y^\lambda_k\}_{k = 1}^\infty$ is a Cauchy sequence in $L^2(0,T;H^1(\Omega))$. To prove that it is a Cauchy sequence in $C(\bar Q)$ as well, we handle the equation satisfied by $y^\lambda_k - y^\lambda_m$ in the same way as we discussed \eqref{E2.9}. We use \eqref{E2.7} to get
\[
\|y^\lambda_k - y^\lambda_m\|_{C(\bar Q)} \le C_0\big(\varepsilon\big(1 + \|u\|_\MT\big)\|y^\lambda_k - y^\lambda_m\|_{C(\bar Q)} + \|y_{0k} - y_{0m}\|_{C(\bar\Omega)}\big).
\]
Taking again $\varepsilon$ sufficiently small and $\lambda \ge \lambda_{\varepsilon,u}$, we deduce
\[
\|y^\lambda_k - y^\lambda_m\|_{C(\bar Q)} \le C_2\|y_{0k} - y_{0m}\|_{C(\bar\Omega)}.
\]
Therefore, $\{y^\lambda_k - y^\lambda_m\}_{k = 1}^\infty$ is a Cauchy sequence in $C(\bar Q)$. Consequently, there exists $y^\lambda \in Y$ such that $y^\lambda_k \to y^\lambda$ in $Y$. It is easy to see that $y^\lambda$ is a solution of \eqref{E2.4}. Now, we  re-substitute $y_u(x,t) = \text{e}^{\lambda t}y^\lambda(x,t)$ and extend $y_u$ to $\bar Q_-$ by $y_0$ to get that $y_u$ is a solution of \eqref{E1.1}.

{\em II - Uniqueness of the solution.} Let $y^\lambda_1, y^\lambda_2 \in Y$ be two solutions of \eqref{E2.4}, and set $y^\lambda = y^\lambda_2 - y^\lambda_1$. Subtracting the equations satisfied by $y^\lambda_2$ and $y^\lambda_1$ we obtain
\[
\left\{\begin{array}{rcll}\displaystyle\frac{\partial y^\lambda}{\partial t} - \Delta y^\lambda + \frac{\partial R_\lambda^+}{\partial y}(t,\hat y^\lambda)y^\lambda&=&\displaystyle K_\lambda^+[u]y^\lambda \ &\text{ in } Q,\vspace{2mm}\\ \displaystyle \partial_ny^\lambda &=& 0 \ &\text{ on } \Sigma ,\\[1ex] y^\lambda(x,0) &=&0\ &\text{ in }\Omega, \end{array}\right.
\]
where $\hat y^\lambda = y^\lambda_1 + \theta(y^\lambda_2 -  y^\lambda_1)$ is some intermediate state with $0 \le \theta(x,t) \le 1$. Multiplying this equation by $y^\lambda$ and invoking again \eqref{{E2.10}}
along with \eqref{E2.5}, we obtain
\begin{align*}
&\frac{1}{2}\|y^\lambda(T)\|^2_{L^2(\Omega)} + \int_Q|\nabla y^\lambda|^2\dx\dt + \int_Q|y^\lambda|^2\dx\dt\\
& \le \varepsilon\big(1 + \|u\|_\MT\big)\|y^\lambda\|^2_{L^2(Q)}.
\end{align*}
Taking $\varepsilon < \big(1 + \|u\|_\MT\big)^{-1}$, we conclude for $\lambda \ge \lambda_{\varepsilon,u}$ that $y^\lambda = 0$, since the last term in the left-hand side absorbs the right-hand side. Obviously the uniqueness of solution of \eqref{E2.4} is equivalent to the uniqueness of solution of \eqref{E1.1}.

{\em III - Estimates.} First we recall that $y_u(x,t) = e^{\lambda t} y^\lambda(x,t)$ is the solution of \eqref{E1.1}, once it has been extended to $\bar Q_-$ by $y_0$. Moreover, the following inequalities hold
\[
\|y_u\|_{L^2(0,T;L^2(\Omega))} \le \text{e}^{\lambda T}\|y^\lambda\|_{L^2(0,T;L^2(\Omega))} \ \text{ and }\ \|y_u\|_{C(\bar Q)} \le \text{e}^{\lambda T}\|y^\lambda\|_{C(\bar Q)}.
\]
Therefore it is enough to establish the estimates for $y^\lambda$. To this end, we define this time $R_\lambda(t,y) = \text{e}^{-\lambda t}R(\text{e}^{\lambda t}y) + \lambda y$ with
\[
\lambda \ge \lambda_R = 2(1 + \|u\|_\MT) - \min\{0,C_R\}.
\]
Now, we multiply equation \eqref{E2.4} by $y^\lambda$ and deal with the reaction term as follows:
\begin{align*}
&R_\lambda(t,y^\lambda)y^\lambda = [R_\lambda(t,y^\lambda) - R_\lambda(t,0)]y^\lambda + R_\lambda(t,0)y^\lambda\\
&= \frac{\partial R_\lambda}{\partial y}(t,\theta y^\lambda)|y^\lambda|^2 + R_\lambda(t,0)y^\lambda\ge 2(1 + \|u\|_\MT)|y^\lambda|^2 - |R(0)||y^\lambda|.
\end{align*}
Then, multiplying equation \eqref{E2.4} by $y^\lambda$ and using this inequality along with \eqref{E2.5} and \eqref{E2.6}, we obtain for every $0< T' < T$ and $Q_{T'} = \Omega \times (0,T')$
\begin{align*}
&\frac{1}{2}\|y^\lambda(T')\|^2_{L^2(\Omega)} + \int_{Q_{T'}}|\nabla y^\lambda|^2\dx\dt + 2(1 + \|u\|_\MT)\int_{Q_{T'}}|y^\lambda|^2\dx\dt\\
&\le \int_{Q_{T'}}(K_\lambda[u]y^\lambda + g_{\lambda,u})y^\lambda\dx\dt + |R(0)|\int_{Q_{T'}}|y^\lambda|\dx\dt + \frac{1}{2}\|y_0(\cdot,0)\|_{L^2(\Omega)}^2\\
&\le \|u\|_\MT\big(\|y^\lambda\|_{L^2(Q_{T'})} + \|y_0\|_{L^2(Q_-)})\|y^\lambda\|_{L^2(Q_{T'})}\\
& + |R(0)||Q|^{\frac{1}{2}}\|y^\lambda\|_{L^2(Q_{T'})} + \frac{1}{2}\|y_0(\cdot,0)\|_{L^2(\Omega)}^2\\
&\le    \big(\frac{1}{2} + \|u\|_\MT\big)\big\|y^\lambda\|^2_{L^2(Q_{T'})} +  \frac{1}{2}\|u\|_\MT^2\big\|y_0\|^2_{L^2(Q_-)}\\
& +  \frac{|Q|}{2}|R(0)|^2 + \frac{1}{2}\|y^\lambda\|^2_{L^2(Q_{T'})} + \frac{1}{2}\|y_0(\cdot,0)\|_{L^2(\Omega)}^2\\
& = \big(1 + \|u\|_\MT\big)\big\|y^\lambda\|^2_{L^2(Q_{T'})} +  \frac{1}{2}\|u\|_\MT^2\big\|y_0\|^2_{L^2(Q_-)}\\
& +  \frac{|Q|}{2}|R(0)|^2 + \frac{1}{2}\|y_0(\cdot,0)\|_{L^2(\Omega)}^2.
\end{align*}
The first term of the right hand side can be absorbed by the left hand side. In this way, we get
\begin{align}
&\|y^\lambda\|_{L^2(0,T;H^1(\Omega))} + \|y^\lambda\|_{C([0,T];L^2(\Omega))}\notag\\
& \le C\big(\|u\|_\MT^2\big\|y_0\|_{L^2(Q_-)} +  |R(0)| +\|y_0(\cdot,0)\|_{L^2(\Omega)}\big). \label{E2.13}
\end{align}

To prove \eqref{E2.3} we use the second inequality of \eqref{E2.5}, \eqref{E2.6} and the results of \cite[\S III-7]{Lad-Sol-Ura68} applied to \eqref{E2.4} to obtain
\begin{align*}
&\|y^\lambda\|_{C(\bar Q)} \le C\Big(\|K_\lambda^+[u]y^\lambda\|_{
{L^\infty((0,T)},L^2(\Omega))} + \|g_{\lambda,u}\|_{\rli} +  \|y_0(\cdot,0)\|_{C(\bar\Omega)} +  |R(0)|\Big)\\
& \le C\Big(\|u\|_\MT\|y^\lambda\|_{C([0,T],L^2(\Omega))} + \|u\|_\MT\|y_0\|_{C(Q_-)} +  \|y_0(\cdot,0)\|_{C(\bar\Omega)} +  |R(0)|\Big).
\end{align*}
Finally, from the equation satisfied by $y^\lambda$, the above estimates and the identity $y(x,t) = \text{e}^{\lambda t}y^\lambda(x,t)$ we conclude \eqref{E2.2} and \eqref{E2.3}.
\endproof

Let us prove some extra regularity of the solution of \eqref{E1.1}.

\begin{theorem}
Under the assumptions of Theorem \ref{T2.1}, if $y_0(\cdot,0) \in H^1(\Omega)$, then $y_u \in H^1(Q)$ and
\begin{equation}
\|y_u\|_{H^1(Q)} \le C_{1,1}\big(\|y_0\|_{L^2(Q_-)}\|u\|_\MT + \|y_0(\cdot,0)\|_{H^1(\Omega)} + |R(0)|\big).
\label{E2.14}
\end{equation}
In addition, if either $\Omega$ is convex or $\Gamma$ is of class $C^{1,1}$, then $y_u \in H^{2,1}(Q)$ and
\begin{equation}
\|y_u\|_{H^{2,1}(Q)} \le C_{2,1}\big(\|y_0\|_{L^2(Q_-)}\|u\|_\MT + \|y_0(\cdot,0)\|_{H^1(\Omega)} + |R(0)|\big).
\label{E2.15}
\end{equation}
The constants $C_{1,1}$ and $C_{2,1}$ depend on $\|u\|_\MT$, but they can be kept fixed on bounded subsets of $\MT$.
\label{T2.2}
\end{theorem}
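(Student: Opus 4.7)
My plan is to bootstrap on Theorem~\ref{T2.1}: freeze the right-hand side of \eqref{E1.1} and apply standard linear parabolic regularity to the resulting Neumann heat equation. Concretely, I would set
\[
F(x,t) := -R(y_u(x,t)) + (K[u]y_u)(x,t) + g_u(x,t),
\]
with $K[u]$ and $g_u$ as defined before Lemma~\ref{L2.1} (the case $\lambda=0$). Since $y_u\in C(\bar Q)$ by Theorem~\ref{T2.1}, one has $R(y_u)\in L^\infty(Q)$, and by Remark~\ref{R2.2} the last two terms sum (a.e.) to $\int_0^T y_u(\cdot,\cdot-s)\dus$. Combining \eqref{E2.5}, \eqref{E2.6} with $\lambda=0$ with the $C(\bar Q)$-bound \eqref{E2.3} yields $F\in L^2(Q)$ with
\[
\|F\|_{L^2(Q)} \le C(\|u\|_\MT)\bigl(\|y_0\|_{L^2(Q_-)}\|u\|_\MT + \|y_0(\cdot,0)\|_{C(\bar\Omega)} + |R(0)|\bigr),
\]
so that $y_u$ solves the \emph{linear} Neumann problem $\partial_t y - \Delta y = F$ in $Q$, $\partial_n y = 0$ on $\Sigma$, $y(\cdot,0) = y_0(\cdot,0)\in H^1(\Omega)$.

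For the bound \eqref{E2.14}, I would test this linear equation formally with $\partial_t y$: integration by parts of the Laplacian term produces $\tfrac12\|\nabla y(T)\|_{L^2(\Omega)}^2 - \tfrac12\|\nabla y_0(\cdot,0)\|_{L^2(\Omega)}^2$, and Young's inequality on $\int_Q F\,\partial_t y\dx\dt$ absorbs half of $\|\partial_t y\|_{L^2(Q)}^2$ into the left-hand side. This delivers an $L^2(Q)$-bound on $\partial_t y$; combined with the $L^2(0,T;H^1(\Omega))$ bound \eqref{E2.2}, it gives $y_u\in H^1(Q)$ with the stated estimate. For \eqref{E2.15}, I would invoke the $H^2$-regularity of the Neumann Laplacian available on convex Lipschitz domains or $C^{1,1}$ domains: any $z\in H^1(\Omega)$ with $\partial_n z = 0$ and $\Delta z\in L^2(\Omega)$ satisfies $\|z\|_{H^2(\Omega)}\le C(\|\Delta z\|_{L^2(\Omega)} + \|z\|_{L^2(\Omega)})$. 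Applied at a.e.\ $t$ with $-\Delta y_u(\cdot,t) = F(\cdot,t) - \partial_t y_u(\cdot,t)\in L^2(\Omega)$, this yields a pointwise $H^2$-estimate; squaring and integrating in $t$, using the $H^1(Q)$-bound just obtained and the $L^2(Q)$-bound on $F$, gives \eqref{E2.15}.

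The delicate point I anticipate is the rigorous justification of the ``test with $\partial_t y$'' step, since $F$ can be discontinuous at the atoms of $u$ and $\partial_t y_u$ is not a priori a function. I would handle this by a Galerkin scheme in the basis of Neumann--Laplace eigenfunctions, where $\partial_t y_N$ exists classically and the energy identity holds termwise, and then pass to the limit; alternatively, approximate $u$ by absolutely continuous measures $u_k$ with $\|u_k\|_\MT$ uniformly bounded, solve the (smooth) approximate problems, and pass to the limit. In both approaches the uniformity of the estimates in Lemma~\ref{L2.1} on bounded subsets of $\MT$ is exactly what propagates the constants to the limit and gives the dependence ``fixed on bounded subsets of $\MT$'' claimed in the theorem.
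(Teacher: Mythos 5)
Your proposal is correct and follows essentially the same route as the paper: rewrite \eqref{E1.1} as the linear Neumann problem $\partial_t y - \Delta y = K[u]y_u + g_u - R(y_u)$ with an $L^2(Q)$ right-hand side bounded via \eqref{E2.2}--\eqref{E2.6}, apply maximal $L^2$ parabolic regularity for $H^1$ initial data (the paper simply cites Showalter, \S III.2, where your testing-with-$\partial_t y$/Galerkin argument is the standard proof), and then use $D(\Delta)=H^2(\Omega)$ on convex or $C^{1,1}$ domains for \eqref{E2.15}.
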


\begin{proof}
For the first part of the theorem we only have to prove that $\frac{\partial y}{\partial t}$ belongs to $L^2(Q)$ and to confirm
the associated estimate. This is a simple consequence of a result that is known for linear parabolic equations; see, for instance, \cite[\S III.2]{Showalter1997}. Indeed, it is enough to write the equation in the form
\[
\frac{\partial y}{\partial t} - \Delta y = K[u]y + g_u - R(y).
\]
Thanks to \cite[\S III.2]{Showalter1997}, the $H^1(Q)$-norm
of $y_u$ can be estimated against the $L^2(Q)$-norm of the right hand side, and additionally $y_u \in L^2(0,T;D(\Delta))$ holds. Therefore, if $\Omega$ is convex or $\Gamma$ is of class $C^{1,1}$, then $D(\Delta) = H^2(\Omega)$ and the estimate \eqref{E2.15} follows; see \cite[Chapters 2 and 3]{Grisvard85}.
\end{proof}

The next step of our analysis is the investigation of the differentiability properties of the control-to-state
mapping $G:\MT \longrightarrow Y$ that associates to $u \in \MT$ the solution $y_u$ of  \eqref{E1.1}, $G(u) = y_u$.

\begin{theorem}
The mapping $G$ is of class $C^1$. For every $u,v \in \MT$, we have that $z_v = G'(u)v$ is the solution of the  problem
\begin{equation}
\left\{\begin{array}{rcll}\displaystyle\frac{\partial z}{\partial t} - \Delta z + R'(y_u)z  &=&\displaystyle K[u]z  +  K[v]y_u + g_v \ &\text{ in } Q,\vspace{2mm}\\ \displaystyle \partial_nz &=& 0 \ &\text{ on } \Sigma ,\\[1ex] z(x,0) &=&0 \ &\text{ in }\Omega. \end{array}\right.
\label{E2.16}
\end{equation}
\label{T2.3}
\end{theorem}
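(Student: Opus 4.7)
The plan is to prove Theorem \ref{T2.3} in three stages: (i) well-posedness of the linearized equation \eqref{E2.16} together with the linear continuity of $v\mapsto z_v$; (ii) Gateaux differentiability of $G$ at $u$ in the direction $v$; (iii) promotion to Fr\'echet differentiability and continuity of $u\mapsto G'(u)$. All three stages rely essentially on the same exponential substitution $z^\lambda(x,t)=\mathrm{e}^{-\lambda t}z(x,t)$, the contraction property expressed in \eqref{E2.7}, and the bounds \eqref{E2.5}--\eqref{E2.6} proved in Lemma~\ref{L2.1}, with the added simplification that \eqref{E2.16} is linear in $z$.

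For stage (i), observe that $R'(y_u)\in C(\bar Q)$ because $y_u\in Y\subset C(\bar Q)$ and $R\in C^1$, and that the forcing $K[v]y_u+g_v$ lies in $\rli$ by Lemma~\ref{L2.1}. Apply the substitution $z^\lambda=\mathrm{e}^{-\lambda t}z$ to \eqref{E2.16}; the transformed equation reads, schematically,
\[
\partial_t z^\lambda-\Delta z^\lambda+[R'(\mathrm{e}^{\lambda t}\cdot)+\lambda-u(\{0\})]z^\lambda = K_\lambda^+[u]z^\lambda + \mathrm{e}^{-\lambda t}(K[v]y_u+g_v),
\]
with $z^\lambda(\cdot,0)=0$. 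For $\lambda$ large enough, the zero-order coefficient is $\ge 1$, the map $z^\lambda\mapsto $ (solution of the linear parabolic problem obtained by freezing $z^\lambda$ on the right) is a contraction on $Y$ by \eqref{E2.7} and the standard $L^\infty$- and $L^2(H^1)$-estimates cited in the proof of Theorem~\ref{T2.1}. Banach's theorem yields a unique $z^\lambda\in Y$; reverting the substitution gives a unique $z_v\in Y$, and the linear a priori bound produces
\(
\|z_v\|_Y\le C(\|u\|_\MT,\|y_u\|_{C(\bar Q)})\,\|v\|_\MT.
\)
Linearity of $v\mapsto z_v$ is automatic from linearity of \eqref{E2.16} in both $z$ and $v$.

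For stage (ii), fix $u,v\in\MT$ and a small real parameter $t\ne 0$; set $y_t=G(u+tv)$, $y=G(u)$, and $w_t=(y_t-y)/t-z_v$. Subtracting the equations for $y_t$ and $y$, applying the mean value theorem to $R(y_t)-R(y)=R'(y_\theta)(y_t-y)$, and using the identity $K[u+tv]=K[u]+tK[v]$, one finds that $(y_t-y)/t$ satisfies \eqref{E2.16} up to the substitutions $R'(y_u)\rightsquigarrow R'(y_\theta)$ and $K[v]y_u\rightsquigarrow K[v]y_t$; subtracting \eqref{E2.16} itself yields
\[
\frac{\partial w_t}{\partial t}-\Delta w_t+R'(y_\theta)w_t=K[u]w_t+[R'(y_u)-R'(y_\theta)]z_v+K[v](y_t-y)
\]
with $w_t(\cdot,0)=0$ and $\partial_n w_t=0$. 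A preliminary continuous dependence argument, identical in structure to the uniqueness proof in Theorem~\ref{T2.1} (multiply by $y_t-y$ and invoke \eqref{E2.5}--\eqref{E2.7} with $\lambda$ large enough to absorb), gives $\|y_t-y\|_Y\le C|t|\|v\|_\MT$; hence $\|y_t-y\|_{C(\bar Q)}\to 0$, so the first source in the equation for $w_t$ converges to zero in $\rli$ by uniform continuity of $R'$ on bounded sets, while the second does so by \eqref{E2.5}. Applying the linear estimate of stage (i) to the equation for $w_t$ gives $\|w_t\|_Y\to 0$, so $G'(u)v=z_v$.

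Stage (iii) is the most delicate. Fr\'echet differentiability follows by redoing the computation of stage (ii) with $t=1$ and a small perturbation $v$, observing that the remainder sources are now of order $\|y_{u+v}-y_u\|_{C(\bar Q)}\cdot \|v\|_\MT=O(\|v\|_\MT^{2})$, uniformly on bounded sets. Continuity of $u\mapsto G'(u)$ requires showing that $G'(u_n)v\to G'(u)v$ in $Y$ uniformly in $\|v\|_\MT\le 1$ whenever $u_n\to u$ in $\MT$. Setting $z_n=G'(u_n)v$, $z=G'(u)v$ and subtracting the linearized equations, the difference $z_n-z$ satisfies a linear problem of the same form as \eqref{E2.16} whose data consist of $[R'(y_u)-R'(y_{u_n})]z_n$, $(K[u_n]-K[u])z_n$, $K[v](y_{u_n}-y_u)$ and $g_{v}$-terms that vanish in the limit; one concludes again via the linear estimate of stage (i). The main obstacle, and the point that requires the most care, is to ensure that the thresholds $\lambda_{\varepsilon,u}$ appearing in Lemma~\ref{L2.1} can be chosen uniformly in $u$ along the sequence $\{u_n\}$ (and uniformly on bounded sets of $\MT$), so that all absorption arguments and all constants in the linear a priori bounds remain uniform; this uniformity is available because $\{u_n\}$ is norm-bounded and the estimate \eqref{E2.8} depends on $u$ only through $|u|(0,1/\sqrt\lambda)$ and $\|u\|_\MT$.
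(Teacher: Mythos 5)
Your route is genuinely different from the paper's. The paper proves Theorem \ref{T2.3} by the implicit function theorem: it introduces the space $\mathcal{Y}=\{y\in Y:\ \partial_t y-\Delta y\in\rli\}$ and the map $\mathcal{F}(y,u)=\big(\partial_t y-\Delta y+R(y)-K[u]y-g_u,\ y(\cdot,0)-y_0(\cdot,0)\big)$, checks that $\partial_y\mathcal{F}(y,u)$ is an isomorphism (which is precisely the well-posedness of your stage (i) linearized equation), and reads \eqref{E2.16} off the identity $\mathcal{F}(G(u),u)=0$. You instead obtain Gateaux and then Fr\'echet differentiability by hand from difference quotients; this is workable, and your stages (i)--(ii) are essentially sound (the linearity of \eqref{E2.16} indeed lets you replace the paper's Schauder argument by Banach's fixed point). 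One small inaccuracy: with $R$ only of class $C^1$, the source $[R'(y_u)-R'(y_\theta)]z_v$ is $o(\|v\|_\MT)$ by the modulus of continuity of $R'$ on bounded sets, not $O(\|v\|_\MT^2)$ as you assert; $o(\|v\|_\MT)$ suffices for Fr\'echet differentiability, so this is harmless but should be stated correctly.

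The genuine flaw is your resolution of the point you yourself single out as the main obstacle: the thresholds $\lambda_{\varepsilon,u}$ of Lemma \ref{L2.1} cannot be chosen uniformly on norm-bounded subsets of $\MT$. The quantity $|u|\big((0,1/\sqrt{\lambda})\big)$ in \eqref{E2.8} tends to $0$ as $\lambda\to\infty$ for each fixed $u$, but not uniformly over bounded sets: for $u_n=\delta_{1/n}$ one has $|u_n|\big((0,1/\sqrt{\lambda})\big)=1$ whenever $\lambda<n^2$, so $\lambda_{\varepsilon,u_n}\to\infty$ while $\|u_n\|_\MT=1$. Norm-boundedness of the sequence is therefore not enough, contrary to your closing claim. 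Two repairs are available. Either use that in your argument the measures are norm-convergent (or norm-small perturbations of a fixed $u$), so that $|u_n|\big((0,\delta)\big)\le |u|\big((0,\delta)\big)+\|u_n-u\|_\MT$ yields the required uniformity for $n$ large (respectively $\|v\|_\MT$ small); or, better, dispense with \eqref{E2.7} altogether for the a priori estimates and argue as the paper does in Part III of the proof of Theorem \ref{T2.1} and in Lemma \ref{L3.1}: keep the full operator $K_\lambda[u]$, bound it by $\|u\|_\MT$ via \eqref{E2.5}, and choose $\lambda$ of order $1+\|u\|_\MT$ so that the zero-order term absorbs the right-hand side; then all constants depend only on $\|u\|_\MT$ and are automatically uniform on bounded sets. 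With one of these corrections your direct proof goes through; as written, the uniformity step is not justified.
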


\begin{proof}
We define the space
\[
\mathcal{Y} = \Big\{y \in Y : \frac{\partial y}{\partial t} - \Delta y \in \rli \Big\},
\]
endowed with the norm
\[
\|y\|_{\mathcal{Y}} = \|y\|_Y + \|\frac{\partial y}{\partial t} - \Delta y\|_{\rli},
\]
$\mathcal{Y}$ is a Banach space. Now we consider the mapping
\begin{align*}
&\mathcal{F} : \mathcal{Y} \times \MT \longrightarrow \rli \times C(\bar\Omega),\\
&\mathcal{F}(y,u) = \big(\frac{\partial y}{\partial t} - \Delta y + R(y) - K[u]y - g_u,y(\cdot,0) - y_0(\cdot,0)\big).
\end{align*}
It is obvious that $\mathcal{F}$ is well defined and
is of class $C^1$. Moreover, we have that
\[
\frac{\partial\mathcal{F}}{\partial y}(y,u)z = \Big(\frac{\partial z}{\partial t} - \Delta z + R'(y)z - K[u]z,z(\cdot,0)\Big).
\]
Let us confirm that $\frac{\partial\mathcal{F}}{\partial y}(y,u):\mathcal{Y} \longrightarrow \rli \times C(\bar\Omega)$ is an isomorphism. Indeed, since obviously $\frac{\partial\mathcal{F}}{\partial y}(y,u)$ is a linear and continuous mapping, we only need to prove that, for every pair $(f,z_0) \in \rli  \times C(\bar\Omega)$, there exists a unique solution $z \in \mathcal{Y}$ of the problem
\[
\left\{\begin{array}{rcll}\displaystyle\frac{\partial z}{\partial t} - \Delta z + R'(y)z  &=&\displaystyle K[u]z + f \ &\text{ in } Q,\vspace{2mm}\\ \displaystyle \partial_nz &=& 0 \ &\text{ on } \Sigma ,\\[1ex] z(x,0) &=&z_0 \ &\text{ in }\Omega. \end{array}\right.
\]
The existence and uniqueness of such a solution is proved in the same way as for the problem \eqref{E1.1}. Hence, an application of the implicit function theorem implies that $G$ is of class $C^1$. The equation \eqref{E2.16} follows easily by differentiating the identity $\mathcal{F}(G(u),u) = 0$ with respect to $u$.
\end{proof}

\begin{remark}
Let us mention  that $z_v = G'(u)v \in H^1(Q)$ holds for every $v \in \MT$. This follows from equation \eqref{E2.16} arguing similarly as in the proof of Theorem \ref{T2.2} and taking into account that $z_v(x,0) = 0$.
\label{R3.1}
\end{remark}

\section{The Control Problem}
\label{S4}
\setcounter{equation}{0}

Now we have all prerequisites to study our optimal control problem, namely

\[
\Pb \min_{u \in \MT}  J(u) = \frac{1}{2}\int_Q|y_u - y_d|^2\dx\dt + \nu\|u\|_\MT,
\]
where $y_d \in L^{\bar p}(Q)$ for some $\bar p > 1 + \frac{n}{2}$ and $\nu > 0$ are given.

\begin{theorem}
Problem \Pb has at least one solution $\bar u$.
\label{T3.1}
\end{theorem}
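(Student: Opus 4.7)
I would use the direct method of the calculus of variations. Let $\{u_k\} \subset \mathcal{M}[0,T]$ be a minimizing sequence for \Pb. Since $J \ge 0$, we may assume $J(u_k) \le J(0) + 1 =: C$, and the penalty term $\nu\|u_k\|_\MT$ is then bounded, so $\{u_k\}$ is bounded in $\mathcal{M}[0,T] = C[0,T]^*$. By the Banach--Alaoglu theorem there is a subsequence (still denoted $\{u_k\}$) with $u_k \stackrel{*}{\rightharpoonup} \bar u$ in $\mathcal{M}[0,T]$. Lower semicontinuity of the norm under weak-$*$ convergence gives $\|\bar u\|_\MT \le \liminf_k \|u_k\|_\MT$.

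Next, let $y_k := y_{u_k}$. Theorem \ref{T2.1} yields a uniform bound of $y_k$ in $Y = L^2(0,T;H^1(\Omega)) \cap C(\bar Q)$, because the constants $C_{1,0}, C_\infty$ are uniform on bounded sets of $\mathcal{M}[0,T]$. From the PDE, $\partial_t y_k = \Delta y_k - R(y_k) + K[u_k]y_k + g_{u_k}$, where $\Delta y_k$ is bounded in $L^2(0,T;H^{-1}(\Omega))$, $R(y_k)$ is bounded in $L^\infty(Q)$ by continuity of $R$ and the uniform bound on $\|y_k\|_{C(\bar Q)}$, and $K[u_k]y_k, g_{u_k}$ are bounded in $L^\infty(Q)$ by Lemma \ref{L2.1}. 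Thus $\{\partial_t y_k\}$ is bounded in $L^2(0,T;H^{-1}(\Omega))$, and by the Aubin--Lions lemma, extracting a further subsequence, $y_k \to \bar y$ strongly in $L^2(Q)$ and a.e., with $\bar y \in Y$ (weak limit points coincide). By dominated convergence, $R(y_k) \to R(\bar y)$ in $L^2(Q)$.

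The delicate step is to identify $\bar y = y_{\bar u}$, i.e., to pass to the limit in the nonlocal integral term. Test the equation against $\phi \in L^2(0,T;H^1(\Omega))$ and, using Fubini, rewrite
\[
\int_Q \Big(\int_0^T y_k(x,t-s)\,du_k(s)\Big)\phi(x,t)\,\dx\dt = \int_0^T F_k(s)\,du_k(s),
\]
where $F_k(s) := \int_Q y_k(x,t-s)\phi(x,t)\,\dx\dt$ (with $y_k$ extended to $Q_-$ by $y_0$). Define $F(s)$ analogously with $\bar y$ in place of $y_k$. Since both $y_k$ and $\bar y$ share the same extension $y_0$ on $Q_-$, a substitution $\tau = t - s$ yields $|F_k(s) - F(s)| \le \|y_k - \bar y\|_{L^2(Q)}\|\phi\|_{L^2(Q)}$ uniformly in $s$, so $F_k \to F$ in $C[0,T]$, while $F \in C[0,T]$ follows from the continuity of $\bar y$ on $\bar Q$ and $y_0$ on $\bar Q_-$. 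Splitting
\[
\int_0^T F_k \,du_k - \int_0^T F\,d\bar u = \int_0^T (F_k - F)\,du_k + \int_0^T F\,d(u_k - \bar u),
\]
the first summand is bounded by $\|F_k - F\|_{C[0,T]} \sup_k \|u_k\|_\MT \to 0$, and the second goes to zero by weak-$*$ convergence of $u_k$ to $\bar u$ tested against $F \in C[0,T]$. The remaining terms in the weak formulation pass to the limit by standard arguments, so $\bar y$ solves \eqref{E1.1} with control $\bar u$; by uniqueness (Theorem \ref{T2.1}), $\bar y = y_{\bar u}$.

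Finally, strong convergence $y_k \to y_{\bar u}$ in $L^2(Q)$ gives $\tfrac12\int_Q |y_k - y_d|^2 \to \tfrac12\int_Q |y_{\bar u} - y_d|^2$, and together with weak-$*$ lower semicontinuity of $\|\cdot\|_\MT$ we obtain $J(\bar u) \le \liminf_k J(u_k) = \inf_{u \in \MT} J(u)$, so $\bar u$ is optimal. The main technical obstacle is the passage to the limit in the bilinear coupling of $u_k$ with $y_k$ inside the nonlocal kernel; the argument above handles it by exploiting the uniform (in $s$) $L^2$-translation estimate that follows from strong $L^2(Q)$ convergence of $y_k$ plus the shared extension by $y_0$ on $Q_-$.
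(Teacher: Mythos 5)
Your overall strategy is viable but genuinely different from the paper's. The paper never uses compactness of the states: it proves Lemma \ref{L3.1}, namely that $u_k \stackrel{*}{\rightharpoonup} \bar u$ in $\MT$ implies $y_{u_k} \to y_{\bar u}$ in $Y$, by subtracting the equations satisfied by $y_{u_k}$ and by $y_{\bar u}$ (the state of the limit control, which already exists by Theorem \ref{T2.1}), observing that $K_\lambda[u_k-\bar u]\bar y + g_{\lambda,u_k-\bar u} \to 0$ pointwise and hence in $L^p(Q)$ by dominated convergence, and concluding with energy and $L^\infty$ parabolic estimates. With that lemma, Theorem \ref{T3.1} is a three-line direct-method argument (boundedness of the minimizing sequence, weak-$*$ extraction, lower semicontinuity of $J$), with no Aubin--Lions extraction and, more importantly, no need to identify an abstract limit of the states. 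Your handling of the bilinear coupling, i.e.\ rewriting the nonlocal term as $\int_0^T F_k(s)\dus$ with $F_k(s)=\int_Q y_k(x,t-s)\phi(x,t)\dx\dt$, the uniform-in-$s$ Cauchy--Schwarz estimate (using the shared prehistory $y_0$), and the splitting into $\int_0^T(F_k-F)\duks + \int_0^T F\,\textup{d}(u_k-\bar u)(s)$, is correct and is the right way to pass to the limit in that term within your scheme.

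There is, however, one genuine gap in your identification step. You assert that the limit obtained by Aubin--Lions satisfies $\bar y \in Y$: strong $L^2(Q)$ convergence together with a uniform bound in $C(\bar Q)$ only yields $\bar y \in L^2(0,T;H^1(\Omega)) \cap L^\infty(Q)$; continuity of the limit would require equicontinuity of $\{y_k\}$, which you have not established and which is delicate here since $y_0(\cdot,0)$ is merely continuous (the paper itself needs an approximation argument in Theorem \ref{T2.1} for this reason). This matters because the uniqueness you invoke, Theorem \ref{T2.1}, is stated for solutions in $Y$, so ``by uniqueness, $\bar y = y_{\bar u}$'' is not yet licensed for your $\bar y$. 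The gap is fixable in two ways: either extend the uniqueness/energy comparison to the class $L^2(0,T;H^1(\Omega)) \cap L^\infty(Q)$ --- this works because the first estimate of \eqref{E2.5} in Lemma \ref{L2.1} only needs $z \in L^2(Q)$ via the same Fubini computation, so a Gronwall argument comparing $\bar y$ with $y_{\bar u}$ goes through --- or drop the compactness-plus-identification route altogether and compare $y_{u_k}$ directly with $y_{\bar u}$, which is exactly what the paper's Lemma \ref{L3.1} does and which in addition gives convergence in all of $Y$, not only in $L^2(Q)$.
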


Before proving this theorem we state the following lemma.

\begin{lemma}
Assume that $u_k \stackrel{*}{\rightharpoonup} \bar u$ in $\MT$ for $k \to \infty$ and let $y_k$ and $\bar y$ be the states associated with $u_k$ and $\bar u$, respectively; then $y_k \to \bar y$ in $Y$.
\label{L3.1}
\end{lemma}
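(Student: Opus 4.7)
\textbf{Proof proposal for Lemma \ref{L3.1}.} The plan is to view $w_k := y_k - \bar y$ as the solution of a linear parabolic equation with zero initial data, homogeneous Neumann boundary, and a forcing term that converges to zero uniformly; then to apply the energy and $L^\infty$ machinery already developed in the proof of Theorem \ref{T2.1}.

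By the Banach--Steinhaus theorem, the weak-$\ast$ convergent sequence $\{u_k\}$ is bounded in $\MT$, so Theorem \ref{T2.1} gives a uniform bound for $\{y_k\}$ in $Y$. Subtracting the equations for $y_k$ and $\bar y$ and using the linearity of $u \mapsto K[u]$ and $u \mapsto g_u$ together with the splitting $K[u_k]y_k - K[\bar u]\bar y = K[u_k]w_k + K[u_k-\bar u]\bar y$, and writing $R(y_k)-R(\bar y) = r_k w_k$ with $r_k(x,t) := \int_0^1 R'(\bar y + \theta w_k)\,d\theta$ (uniformly bounded since $y_k,\bar y$ are uniformly bounded in $C(\bar Q)$, and $r_k \ge C_R$), I obtain
\[
\frac{\partial w_k}{\partial t} - \Delta w_k + r_k w_k = K[u_k]w_k + F_k \text{ in }Q,\ \partial_n w_k = 0,\ w_k(\cdot,0)=0,
\]
where $F_k := K[u_k-\bar u]\bar y + g_{u_k-\bar u}$.

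The key step is to prove that $F_k \to 0$ in $C(\bar Q)$. Define $\Phi : \bar\Omega \times [0,T]\times [0,T] \to \mathbb{R}$ by $\Phi(x,t,s) = \bar y(x,t-s)$ for $s<t$ and $\Phi(x,t,s) = y_0(x,t-s)$ for $s\ge t$. The compatibility $\bar y(\cdot,0) = y_0(\cdot,0)$ ensures that $\Phi$ is continuous on a compact set, hence uniformly continuous. Consequently the family $\mathcal{K} := \{\Phi(x,t,\cdot) : (x,t)\in\bar Q\}$ is bounded and equicontinuous in $C[0,T]$, and thus relatively compact by Arzel\`a--Ascoli. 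Writing
\[
F_k(x,t) = \int_0^T \Phi(x,t,s)\,d(u_k-\bar u)(s),
\]
a standard $\varepsilon/3$-argument (using that $\|u_k-\bar u\|_\MT$ is bounded and that for every $\phi \in C[0,T]$ the pairing $\int \phi\,d(u_k-\bar u)$ tends to zero) shows that weak-$\ast$ convergence is uniform on $\mathcal{K}$, so $\|F_k\|_{C(\bar Q)} \to 0$.

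With this convergence in hand, the exponential substitution $w_k^\lambda = e^{-\lambda t}w_k$ transforms the equation for $w_k$ into
\[
\frac{\partial w_k^\lambda}{\partial t} - \Delta w_k^\lambda + (r_k + \lambda - u_k(\{0\}))w_k^\lambda = K_\lambda^+[u_k]w_k^\lambda + e^{-\lambda t}F_k.
\]
Because $\|u_k\|_\MT$, $|u_k(\{0\})|$, and $r_k$ are uniformly bounded, $\lambda$ can be chosen \emph{independently of $k$} so that $r_k + \lambda - u_k(\{0\}) - \|u_k\|_\MT \ge 1$ everywhere. Testing with $w_k^\lambda$ and using the $L^2$-bound of Lemma \ref{L2.1} on $K_\lambda^+[u_k]$ yields, after absorption and Young's inequality,
\[
\|w_k^\lambda\|_{L^2(0,T;H^1(\Omega))} + \|w_k^\lambda\|_{C([0,T];L^2(\Omega))} \le C\,\|F_k\|_{L^2(Q)} \to 0.
\]
Finally, the $L^\infty$-estimate for linear parabolic equations from \cite[\S III-7]{Lad-Sol-Ura68} combined with the second inequality of \eqref{E2.5} gives
\[
\|w_k^\lambda\|_{C(\bar Q)} \le C\big(\|u_k\|_\MT\,\|w_k^\lambda\|_{C([0,T];L^2(\Omega))} + \|F_k\|_{L^\infty(Q)}\big) \to 0,
\]
and undoing the exponential change yields $w_k \to 0$ in $Y$.

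The main obstacle is the uniform convergence $F_k \to 0$: one must identify the correct joint continuity of $\Phi$ across the discontinuity at $s=t$ (which relies on the compatibility between $\bar y$ and $y_0$ at $t=0$) and then exploit the fact that bounded weak-$\ast$ convergence in $\MT = C[0,T]^\ast$ is uniform on relatively compact subsets of $C[0,T]$.
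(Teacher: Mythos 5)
Your proposal is correct and follows the same skeleton as the paper's proof: subtract the two state equations, write the difference as a linear parabolic problem with zero initial datum whose forcing is $K[u_k-\bar u]\bar y+g_{u_k-\bar u}$, perform the exponential shift, absorb the $K$-term acting on the difference by an energy estimate, and finish with the $L^\infty$ bound from Ladyzhenskaya--Solonnikov--Ural'tseva combined with the second inequality of \eqref{E2.5}. The genuine difference lies in the key convergence step for the forcing term $F_k$. The paper only establishes pointwise convergence $F_k(x,t)\to 0$ (from weak$^*$ convergence tested against the continuous function $s\mapsto \bar y(x,t-s)$, extended by the prehistory) together with the uniform bound from \eqref{E2.5}--\eqref{E2.6}, and then concludes $F_k\to 0$ in $L^p(Q)$ for every $p<\infty$ by dominated convergence, which is enough for both the energy estimate and the sup-norm estimate. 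You instead prove the stronger uniform convergence $F_k\to 0$ in $C(\bar Q)$ by observing that the kernels $\Phi(x,t,\cdot)$ form a relatively compact subset of $\CT$ (Arzel\`a--Ascoli, using the compatibility $\bar y(\cdot,0)=y_0(\cdot,0)$) and that bounded weak$^*$ convergent sequences in $\MT$ converge uniformly on such sets; this is a clean and valid alternative, slightly less elementary but giving a sharper statement. Two small cautions on the energy step: the ``$L^2$-bound of Lemma \ref{L2.1} on $K_\lambda^+[u_k]$'' you need is \emph{not} \eqref{E2.7} (whose threshold $\lambda_{\varepsilon,u_k}$ is not uniform in $k$, so it cannot be used here), but the crude estimate $\|K_\lambda^+[u_k]z\|_{L^2(Q)}\le\|u_k\|_\MT\|z\|_{L^2(Q)}$, which follows from the same computation as the first inequality of \eqref{E2.5}; your choice of $\lambda$ with the term $-\|u_k\|_\MT$ shows this is what you actually use, so the argument stands, but the citation should be corrected. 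Also, splitting off the atom $u_k(\{0\})$ is harmless but unnecessary: the paper keeps the full $K_\lambda[u_k]$ on the right and absorbs it by taking $\lambda\ge 2(M+1)-\min\{0,C_R\}$, which is equivalent to your uniform-in-$k$ choice of $\lambda$.
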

\begin{proof}
Since $u_k \stackrel{*}{\rightharpoonup} \bar u$ in $\MT$  as $k \to \infty$, we know that there exists a constant $M > 0$ such that $\|u_k\|_\MT \le M$ $\forall k \ge 1$. Hence, $\|\bar u\|_\MT \le M$ holds as well. Set $y^\lambda_k(x,t) = \text{e}^{-\lambda t}y_k(x,t)$ and $\bar y^\lambda(x,t) = \text{e}^{-\lambda t}\bar y(x,t)$. Then $y^\lambda_k$ and $\bar y^\lambda$ satisfy \eqref{E2.4} for controls $u:=u_k$ and $u:=\bar u$, respectively. Let us define $w_k^\lambda = y^\lambda_k - \bar y^\lambda$. Then, subtracting these two equations and taking again $R_\lambda(t,y) = \text{e}^{-\lambda t}R(\text{e}^{\lambda t}y) + \lambda y$ with $\lambda \ge 2(M+1) - \min\{0,C_R\}$, we get
\begin{equation}
\left\{\begin{array}{rcll}\displaystyle\frac{\partial w^\lambda_k}{\partial t} - \Delta w^\lambda_k + \frac{\partial R_\lambda}{\partial y}(t,\hat y^\lambda_k)w^\lambda_k&=&\displaystyle K_\lambda[u_k]y_k^\lambda  - K_\lambda[\bar u]\bar y^\lambda + g_{\lambda,u_k} - g_{\lambda,\bar u}\ &\text{ in } Q,\vspace{2mm}\\ \displaystyle \partial_nw_k^\lambda &=& 0 \ &\text{ on } \Sigma ,\\[1ex] w_k^\lambda(x,0) &=&0\ &\text{ in }\Omega. \end{array}\right.
\label{E3.1}
\end{equation}
with intermediate states $\hat y^\lambda_k$.

Testing this equation by $w^\lambda_k$ and invoking \eqref{E2.7}, we get for every $0 < T' < T$
\begin{align*}
&\frac{1}{2}\|w^\lambda_k(T')\|^2_{L^2(\Omega)} + \int_{Q_{T'}} \left[|\nabla w^\lambda_k|^2 + 2(M+1)|w^\lambda_k|^2\right]\dx\dt\\
&\le \int_{Q_{T'}}\left[K_\lambda[u_k]y_k^\lambda - K_\lambda[\bar u]\bar y + g_{\lambda,u_k} - g_{\lambda,\bar u}\right]w^\lambda_k\dx\dt\\
&= \int_{Q_{T'}}\left[K_\lambda[u_k]w^\lambda_k + K_\lambda[u_k - \bar u]\bar y + g_{\lambda,u_k - \bar u}\right]w^\lambda_k\dx\dt\\
&\le \|u_k\|_\MT\|w^\lambda_k\|^2_{L^2(Q_{T'})} + \|K_\lambda[u_k - \bar u]\bar y + g_{\lambda,u_k - \bar u}\|_{L^2(Q)}\|w^\lambda_k\|_{L^2(Q_{T'})}\\
&\le (M+1)\|w^\lambda_k\|_{L^2(Q_{T'})} + \frac{1}{2}\|K_\lambda[u_k - \bar u]\bar y + g_{\lambda,u_k - \bar u}\|_{L^2(Q)}^2.
\end{align*}

The first term of the right hand side can be absorbed by the left hand side and we infer
\begin{equation}
\|w^\lambda_k\|_{L^2(0,T;H^1(\Omega))} + \|w^\lambda_k\|_{C([0,T],L^2(\Omega))} \le C\big(\|K_\lambda[u_k - \bar u]\bar y + g_{\lambda,u_k - \bar u}\|_{L^2(Q)}\big).
\label{E3.2}
\end{equation}
Let us prove that the right hand side of the inequality converges to zero. From the convergence $u_k \stackrel{*}{\rightharpoonup} \bar u$ in $\MT$ and by the continuity of $\bar y$ we get for $k \to \infty$
\[
(K_\lambda[u_k - \bar u]\bar y + g_{\lambda,u_k - \bar u}) (x,t) = \int_0^T\text{e}^{-\lambda s}\bar y(x,t-s)\, d(u_k - \bar u)(s) \to 0 \ \ \forall (x,t) \in Q,
\]
i.e. pointwise convergence. Moreover, from \eqref{E2.5} and \eqref{E2.6} we have
\begin{align*}
&\|K_\lambda[u_k - \bar u]\bar y + g_{\lambda,u_k - \bar u}\|_{\rli}\le \big(\|\bar y\|_{C(\bar Q)} + \|y_0\|_{C(\bar Q_-)}\big)\|u_k - \bar u\|_\MT\\
& \le 2M\big(\|\bar y\|_{C(\bar Q)} + \|y_0\|_{C(\bar Q_-)}\big)\quad \forall k.
\end{align*}
From the Lebesgue dominated convergence theorem we conclude that $K_\lambda[u_k - \bar u]\bar y + g_{\lambda,u_k - \bar u} \to 0$ in $L^p(Q)$ for every $p < \infty$. Therefore, we infer from \eqref{E3.2} the convergence $w_k^\lambda \to 0$ in $L^2(0,T;H^1(\Omega)) \cap C([0,T],L^2(\Omega))$, and hence $y_k \to \bar y$ in $L^2(0,T;H^1(\Omega)) \cap C([0,T],L^2(\Omega))$.

Let us show show the uniform convergence.  From equation \eqref{E3.1}, using the estimates of  \cite[\S III-8]{Lad-Sol-Ura68} and \eqref{E2.5} we infer for $p > 1 + \frac{n}{2}$
\begin{align*}
&\|w^\lambda_k\|_{C(\bar Q)} \le C_1\|K_\lambda[u_k]w^\lambda_k\|_{
{L^\infty((0,T)},L^2(\Omega))} + C_2\|K_\lambda[u_k - \bar u]\bar y + g_{\lambda,u_k - \bar u}\|_{L^p(\Omega))}\\
&\le C_1M\|w^\lambda_k\|_{C([0,T],L^2(\Omega))} + C_2\|K_\lambda[u_k - \bar u]\bar y + g_{\lambda,u_k - \bar u}\|_{L^p(\Omega))} \to 0 \text{ as } k \to \infty.
\end{align*}
We have proved that $w_k^\lambda \to 0$ in $Y$. Transforming $y_k^\lambda$ and $\bar y^\lambda$ back to $y_k$ and $\bar y$, this leads to $\|y_k - \bar y\|_Y \to 0$.
\end{proof}

{\em Proof of Theorem \ref{T3.1}.}  Let $\{u_k\}_{k = 1}^\infty \subset \MT$ be a minimizing sequence of \Pb. Since
\[
\nu\|u_k\|_\MT \le J(u_k) \le J(0) < + \infty,
\]
we deduce that $\{u_k\}_{k = 1}^\infty$ is bounded in $\MT$. Hence, we can extract a subsequence, denoted in the same way, such that $u_k \stackrel{*}{\rightharpoonup} \bar u$ in $\MT$. Denote by $y_k$ and $\bar y$ the states associated with $u_k$ and $\bar u$, respectively. From Lemma \ref{L3.1} we know that $y_k \to \bar y$ in $L^2(Q)$. This convergence, along with \eqref{E2.3}, implies that $J(\bar u) \le \liminf_{k \to \infty}J(u_k) = \inf\Pb$, and hence $\bar u$ is a solution of \Pb.
\endproof

Next we derive the first order optimality conditions that have to be satisfied by any local solution of the problem \Pb. We distinguish between two different types of local solutions. To this end, we recall that $\MT \subset H^1(0,T)^*$, the embedding being continuous and compact. Notice that $H^1(0,T)$ is compactly embedded in $C[0,T]$ and then by transposition we deduce the compactness of $\MT \subset H^1(0,T)^*$.

\begin{definition}
A control $\bar u$ is called a local solution or local minimum of  \Pb in the sense of $\MT$ (respectively $H^1(0,T)^*$) if there exists a ball $B_\varepsilon(\bar u)$ in the associated space such that $J(\bar u) \le J(u)$ $\forall u \in \MT \cap B_\varepsilon(\bar u)$.
We will say that $\bar u$ is a local solution if it is a local solution in some of the two notions defined above.
\label{D4.1}
\end{definition}

Due to the continuity of the above embeddings, it follows immediately that, if $\bar u$ is a local solution in the $H^1(0,T)^*$ sense, then it is also a local solution in the $\MT$  sense. The converse implication is not true, in general.

Let us define the two different functionals forming $J(u) = F(u) + \nu j(u)$ by
\[
F(u) = \frac{1}{2}\int_Q|y_u - y_d|^2\dx\dt \quad \text{ and } \quad
 j(u) = \|u\|_\MT.
\]

\begin{theorem}
The functional $F:\MT \longrightarrow \mathbb{R}$ is of class $C^1$. Its derivative is given by
\begin{equation}
F'(u)v = \int_Q\varphi_u(K[v]y_u + g_v)\, dx\dt\quad \forall u,v \in \MT,
\label{E3.3}
\end{equation}
where $\varphi_u \in H^1(Q) \cap C(\bar Q)$ is the solution of the adjoint state equation
\begin{equation}
\left\{\begin{array}{rcll}\displaystyle -\frac{\partial\varphi}{\partial t} - \Delta\varphi + R'(y_u)\varphi &=&\displaystyle K^*[u]\varphi + y_u - y_d \ &\text{ in } Q,\vspace{2mm}\\ \displaystyle \partial_n\varphi &=& 0 \ &\text{ on } \Sigma\\[1ex]\varphi(x,T) &=& 0 \ &\text{ in } \Omega, \end{array}\right.
\label{E3.4}
\end{equation}
and the operator $K^*$ is defined by
\begin{equation}
(K^*[u]w)(x,t) = \int_{[0,T - t)}w(x,t+s)\dus\quad \forall w \in C(\bar Q).
\label{E3.5}
\end{equation}
\label{T3.2}
\end{theorem}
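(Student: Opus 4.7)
The strategy is to write $F = \Phi \circ G$ with $\Phi(y)=\tfrac{1}{2}\|y-y_d\|_{L^2(Q)}^2$. Since $\Phi:L^2(Q)\to\mathbb{R}$ is $C^1$ with $\Phi'(y)h = \int_Q (y-y_d)\,h\,\dx\dt$, $G:\MT\to Y$ is $C^1$ by Theorem \ref{T2.3}, and $Y\hookrightarrow L^2(Q)$ continuously, the chain rule gives $F\in C^1(\MT)$ and
\[
F'(u)v = \int_Q (y_u - y_d)\, z_v\,\dx\dt,
\]
where $z_v = G'(u)v$ solves \eqref{E2.16}. By Remark \ref{R3.1}, $z_v \in H^1(Q)\cap C(\bar Q)$.

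Next I would establish existence, uniqueness, and $H^1(Q)\cap C(\bar Q)$ regularity for $\varphi_u$ solving \eqref{E3.4}. Setting $\psi(x,t):=\varphi_u(x,T-t)$ converts \eqref{E3.4} into a forward parabolic problem on $Q$ with homogeneous initial datum, Neumann boundary conditions, source $(y_u-y_d)(x,T-t)\in L^{\bar p}(Q)$, and a kernel operator that mirrors $K[u]$ up to a time reflection. Repeating the Schauder-fixed-point and $\lambda$-shift arguments of Theorems \ref{T2.1} and \ref{T2.2}, together with the $L^p$-to-$C^0$ parabolic estimates of \cite[\S III-8]{Lad-Sol-Ura68} (applicable because $\bar p > 1 + n/2$), then yields the required regularity of $\varphi_u$.

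With both $z_v$ and $\varphi_u$ in $H^1(Q)$, I would multiply \eqref{E2.16} by $\varphi_u$ and integrate over $Q$. The time-boundary terms vanish because $z_v(\cdot,0)=0$ and $\varphi_u(\cdot,T)=0$, and the spatial boundary terms vanish by the Neumann conditions, producing
\[
\int_Q z_v\!\left(-\frac{\partial \varphi_u}{\partial t}-\Delta\varphi_u + R'(y_u)\varphi_u\right)\!\dx\dt = \int_Q \varphi_u\, K[u]z_v\,\dx\dt + \int_Q \varphi_u\,(K[v]y_u + g_v)\,\dx\dt.
\]
A Fubini--Tonelli switch on $\{(s,t):0\le s<t\le T\}$, followed by the substitution $\tau=t-s$, gives the duality $\int_Q \varphi_u\, K[u]z_v\,\dx\dt = \int_Q z_v\, K^*[u]\varphi_u\,\dx\dt$. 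Inserting this and using the adjoint equation \eqref{E3.4} to identify $-\partial_t\varphi_u - \Delta\varphi_u + R'(y_u)\varphi_u - K^*[u]\varphi_u = y_u - y_d$ collapses the left-hand side to $\int_Q (y_u-y_d)\,z_v\,\dx\dt = F'(u)v$, which establishes \eqref{E3.3}.

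The main obstacle I anticipate is the adjoint-regularity step: because $K^*[u]$ integrates $\varphi$ over future times, re-running the well-posedness proof of Theorem \ref{T2.1} on the time-reversed equation requires careful tracking of half-open intervals and of the atom of $u$ at $0$. A subtle but minor additional point is the asymmetry between $[0,t)$ in $K[u]$ and $[0,T-t)$ in $K^*[u]$; since the outer $dt$-integral is Lebesgue, the slice $\{t=T\}$ is negligible, and the Fubini switch correctly reproduces the definition \eqref{E3.5}.
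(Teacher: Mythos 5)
Your proposal is correct and follows essentially the same route as the paper: the chain rule through $G'(u)v=z_v$, well-posedness and regularity of the adjoint state by time reversal and a rerun of the arguments of Theorems \ref{T2.1} and \ref{T2.2} (this is the paper's Proposition \ref{P3.1}), the Fubini/change-of-variables duality between $K[u]$ and $K^*[u]$ (the paper's identity \eqref{E3.10}), and an integration by parts exploiting $z_v(\cdot,0)=\varphi_u(\cdot,T)=0$ together with the Neumann conditions. The only cosmetic difference is that you test the linearized equation \eqref{E2.16} with $\varphi_u$ while the paper tests the adjoint equation \eqref{E3.4} with $z_v$, which is the same computation read in the opposite direction.
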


Before proving this theorem we analyze the adjoint state equation \eqref{E3.4}.

\begin{proposition} For all $u \in \MT$,
there exists a unique solution $\varphi \in H^1(Q) \cap C(\bar Q)$ of \eqref{E3.4} and it holds
\begin{align}
&\|\varphi\|_{H^1(Q)} \le M_{1,1}\|y_u - y_d\|_{L^2(Q)},\label{E3.6}\\
&\|\varphi\|_{C(\bar Q)} \le M_\infty\|y_u - y_d\|_{L^{\bar p}(Q)}.\label{E3.7}
\end{align}
Moreover, if either $\Gamma$ is of class $C^{1,1}$ or $\Omega$ is convex, then $\varphi \in H^{2,1}(Q)$ and
\begin{equation}
\|\varphi\|_{H^{2,1}(Q)} \le M_{2,1}\|y_u - y_d\|_{L^2(Q)}.
\label{E3.8}
\end{equation}
The constants $M_{1,1}$, $M_\infty$ and $M_{2,1}$ depend on $u$, but they can be taken fixed on bounded subsets of $\MT$.
\label{P3.1}
\end{proposition}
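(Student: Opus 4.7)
The plan is to reduce the backward problem \eqref{E3.4} to a forward linear problem of the same type as \eqref{E1.1} and then invoke the machinery of Theorems \ref{T2.1} and \ref{T2.2}. Setting $\tilde\varphi(x,t) := \varphi(x,T-t)$, $\tilde a(x,t) := R'(y_u(x,T-t))$ and $\tilde f(x,t) := (y_u - y_d)(x,T-t)$, and performing the change of variable $\tau = T-t$ in the definition \eqref{E3.5} of $K^*$, one checks the identity $(K^*[u]\varphi)(x,T-\tau) = (K[u]\tilde\varphi)(x,\tau)$. Hence $\tilde\varphi$ is characterized by the forward problem
\[
\partial_t\tilde\varphi - \Delta\tilde\varphi + \tilde a\,\tilde\varphi = K[u]\tilde\varphi + \tilde f \ \text{in } Q,\quad \partial_n\tilde\varphi = 0 \text{ on } \Sigma,\quad \tilde\varphi(\cdot,0)=0,
\]
where $\tilde a \in L^\infty(Q)$ with $\tilde a \ge C_R$, since $y_u \in C(\bar Q)$ by Theorem \ref{T2.1} and $R \in C^1$.

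Existence and uniqueness of a solution in $Y$ would follow by repeating, in the simpler linear setting, the exponential-weight argument used for Theorem \ref{T2.1}. I would introduce $\tilde\varphi^\lambda(x,t) = e^{-\lambda t}\tilde\varphi(x,t)$, absorb $u(\{0\})$ and $\lambda + \tilde a$ into a coercive zero-order coefficient bounded below by $1$, and define the affine map $F:C(\bar Q) \to C(\bar Q)$ sending $z$ to the solution of the resulting linear parabolic equation with source $K_\lambda^+[u]z + \tilde f$ and zero initial datum. By Lemma \ref{L2.1}, for $\lambda$ large enough, $F$ is a contraction, and its fixed point yields $\tilde\varphi^\lambda$; uniqueness is immediate by the linearity of $F$.

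For the $H^1$-estimate \eqref{E3.6} I would test the equation for $\tilde\varphi$ by $\tilde\varphi$ itself, use the lower bound on $\tilde a$ together with the first inequality of \eqref{E2.5}, and absorb the $K[u]\tilde\varphi$ contribution by a sufficiently large choice of $\lambda$, exactly as in the proof of Theorem \ref{T2.1}. This produces the $L^2(0,T;H^1(\Omega)) \cap C([0,T];L^2(\Omega))$ bound in terms of $\|\tilde f\|_{L^2(Q)}$. The $L^2(Q)$-control of $\partial_t\tilde\varphi$, and hence the full estimate \eqref{E3.6}, as well as \eqref{E3.8} under the smoothness hypothesis on $\Omega$, then follow as in Theorem \ref{T2.2}, since the right-hand side $K[u]\tilde\varphi + \tilde f - \tilde a\tilde\varphi$ already belongs to $L^2(Q)$ and $\tilde\varphi(\cdot,0)=0$.

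The main obstacle is the $L^\infty$-estimate \eqref{E3.7}, because $\tilde f$ is only in $L^{\bar p}(Q)$ with $\bar p > 1 + n/2$ rather than in $L^\infty(Q)$. I would invoke the $L^p$–$L^\infty$ parabolic regularity of \cite[\S III-7]{Lad-Sol-Ura68}, paralleling the closing step of the proof of Theorem \ref{T2.1}. The nonlocal term $K[u]\tilde\varphi$ is not in $L^\infty(Q)$, but only in $L^\infty((0,T);L^2(\Omega))$ by the second inequality of \eqref{E2.5}; that class is nevertheless admissible for the regularity results invoked, and combining it with $\tilde f \in L^{\bar p}(Q)$ and the preceding $C([0,T];L^2(\Omega))$ bound on $\tilde\varphi$ yields \eqref{E3.7}. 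The uniformity of the constants $M_{1,1}$, $M_\infty$ and $M_{2,1}$ on bounded subsets of $\MT$ is inherited from Lemma \ref{L2.1} and from the analogous property in Theorem \ref{T2.1}.
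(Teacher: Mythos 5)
Your proposal is correct and follows essentially the same route as the paper: there the backward problem \eqref{E3.4} is likewise reduced by the time reversal $t\mapsto T-t$ (combined with the exponential weight), under which $K^*[u]$ turns into $K_\lambda[u]$, and one then argues exactly as in Theorems \ref{T2.1} and \ref{T2.2}, with the only new point being that $f\in L^{\bar p}(Q)$, $\bar p>1+\frac{n}{2}$, still yields the continuity (H\"older) estimate via \cite[\S III-10]{Lad-Sol-Ura68}. Your replacement of the Schauder argument by a Banach contraction is only a minor simplification made possible by linearity, not a genuinely different approach.
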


\begin{proof}
Given $\lambda > 0$, we set $\psi^\lambda(x,t) = \text{e}^{-\lambda t} \varphi(x,T-t)$ in $Q$. Then we have $(K_\lambda[u]\psi)(x,t) = (\text{e}^{-\lambda t}K^*[u]\varphi)(x,T-t)$, and \eqref{E3.4} is transformed to the forward equation
\begin{equation}
\left\{\begin{array}{rcll}\displaystyle \frac{\partial\psi^\lambda}{\partial t} - \Delta\psi^\lambda + R'(\hat y_u)\psi^\lambda + \lambda\psi^\lambda &=&\displaystyle K_\lambda[u]\psi + f \ &\text{ in } Q,\vspace{2mm}\\ \displaystyle \partial_n\psi^\lambda &=& 0 \ &\text{ on } \Sigma\\[1ex]\psi^\lambda(x,0) &=& 0 \ &\text{ in } \Omega, \end{array}\right.
\label{E3.9}
\end{equation}
where $\hat y_u(x,t) = y_u(x,T-t)$ and $f(x,t) = (y_u - y_d)(x,T-t)$. Now, we can argue as in Theorems \ref{T2.1} and \ref{T2.2} to get the existence, uniqueness and regularity. The only difference is that $f \in L^{\bar p}(Q)$ with $\bar p > 1 + \frac{n}{2}$, which is enough to deduce the H\"older regularity of the solution of \eqref{E3.9}; see \cite[\S III-10]{Lad-Sol-Ura68}.
\end{proof}

{Let us observe that, in some sense, the operator $K^*[u]$ is the adjoint of $K[u]$ with respect to the $L^2(Q)$ scalar product.} Indeed, given $w, z \in C(\bar Q)$, applying Fubini's Theorem and making the change of variables $\tau = t + s$ we get
\begin{align}
&\int_Q (K^*[u]w)(x,t)z(x,t)\dx\dt = \int_Q\left(\int_{[0,T-t)}w(x,t+s)\dus\right) z(x,t)\dx\dt\notag\\
&=\int_\Omega\int_{[0,T)}\left(\int_0^{T-s}w(x,t+s)z(x,t)\dt\right)\dus\dx\notag\\
& = \int_\Omega\int_{[0,T)}\left(\int_s^Tw(x,\tau)z(x,\tau - s)\dtau\right)\dus\dx\notag\\
&= \int_\Omega\int_0^T\left(\int_{[0,\tau)} z(x,\tau-s)\dus\right)w(x,\tau)\dtau\dx\notag\\
& = \int_Q(K[u]z)(x,\tau)w(x,\tau)\dx\dtau.
\label{E3.10}
\end{align}

{\em Proof of Theorem \ref{T3.2}.} Let us set $z_v = G'(u)v$. Thanks to Remark \ref{R3.1} and Proposition \ref{P3.1}, we have that $z_v, \varphi_u \in H^1(Q)$. Hence, we can multiply equation \eqref{E3.4} by $z_v$ and perform an integration by parts. Using \eqref{E3.10}, \eqref{E2.16} and the fact that $\varphi_u(x,T) = z_v(x,0) = 0$ in $\Omega$, we get
\begin{align*}
&F'(u)v = \int_Q(y_u - y_d)z_v\dx\dt\\
& = \int_Q\big[-\frac{\partial\varphi_u}{\partial t}z_v + \nabla\varphi_u\nabla z_v + R'(y_u)\varphi_u z_v - (K^*[u]\varphi_u)z_v\big]\dx\dt\\
& = \int_Q\big[\frac{\partial z_v}{\partial t}\varphi_u + \nabla\varphi_u\nabla z_v + R'(y_u)z_v\varphi_u - (K[u]z_v)\varphi_u\big]\dx\dt\\
& = \int_Q\big(K[v]y_u + g_v\big)\varphi_u\dx\dt,
\end{align*}
which proves \eqref{E3.3}.
\endproof

We continue by studying the function $j:\MT \longrightarrow \mathbb{R}$. Since $j$ is Lipschitz and convex, we know that it has a nonempty subdifferential and possesses directional derivatives at every point $u \in \MT$ and in any direction $v \in \MT$. They will be denoted by $\partial j(u)$ and $j'(u;v)$, respectively.

Let us recall  some properties of $\partial j(u)$ and $j'(u;v)$; see \cite{CKK2016} and \cite{Casas-Kunisch2014} for similar results.

\begin{proposition}
If $\lambda \in \partial j(\bar u)$ with $\bar u \neq 0$ and $\lambda \in C[0,T]$, then the following properties hold
\begin{align}
& 
{\|\lambda\|_{\CT} = 1},\label{E3.11}\\
& \begin{array}{l}\text{supp}(\bar u^+) \subset \{t \in [0,T] : \lambda(t) = +1\},\\\text{supp}(\bar u^-) \subset \{t \in [0,T] : \lambda(t) = -1\},\end{array}\label{E3.12}
\end{align}
where $\bar u = \bar u^+ - \bar u^-$ is the Jordan decomposition of the measure 
{$\bar u$}.
\label{P3.2}
\end{proposition}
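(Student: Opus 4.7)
The plan is to work directly from the subdifferential inequality
\[
j(u) \geq j(\bar u) + \int_0^T \lambda(s)\,d(u-\bar u)(s)\qquad\forall u\in\MT,
\]
which is the definition of $\lambda\in\partial j(\bar u)$ once we identify the continuous function $\lambda$ with the corresponding element of $\MT^{**}$ via the pairing $\langle\lambda,u\rangle=\int_0^T\lambda\,du$. Each conclusion will then follow from making judicious choices of $u$ in this inequality.

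First I would derive the upper bound on $\|\lambda\|_\CT$. Plugging $u=\bar u+tv$ for $v\in\MT$ arbitrary and $t>0$, dividing by $t$ and letting $t\to 0^+$ gives
\[
\int_0^T\lambda(s)\,dv(s)\le\|v\|_\MT\qquad\forall v\in\MT,
\]
so by Riesz duality $\|\lambda\|_\CT\le 1$. Next I would compute $\int_0^T\lambda\,d\bar u$ exactly by taking $u=0$ and then $u=2\bar u$ in the subdifferential inequality; the two resulting bounds pinch the integral from both sides and produce
\[
\int_0^T\lambda(s)\,d\bar u(s)=\|\bar u\|_\MT.
\]
Since $\bar u\neq 0$ and the integral is bounded by $\|\lambda\|_\CT\|\bar u\|_\MT$, equality here together with $\|\lambda\|_\CT\le 1$ forces $\|\lambda\|_\CT=1$, giving \eqref{E3.11}.

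For the support statements \eqref{E3.12}, I would insert the Jordan decomposition $\bar u=\bar u^+-\bar u^-$ on both sides of $\int\lambda\,d\bar u=\|\bar u\|_\MT=\bar u^+([0,T])+\bar u^-([0,T])$. Rearranging gives
\[
\int_0^T(1-\lambda(s))\,d\bar u^+(s)+\int_0^T(1+\lambda(s))\,d\bar u^-(s)=0.
\]
Since $\|\lambda\|_\CT=1$, the integrands $1-\lambda$ and $1+\lambda$ are both nonnegative continuous functions, and $\bar u^\pm$ are nonnegative measures, so each integral must vanish separately. By continuity of $\lambda$, the open sets $\{t:\lambda(t)<1\}$ and $\{t:\lambda(t)>-1\}$ must therefore have zero $\bar u^+$ and $\bar u^-$ measure, respectively. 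Taking complements, $\bar u^\pm$ is concentrated on the closed sets $\{\lambda=\pm 1\}$, whence $\mathrm{supp}(\bar u^\pm)\subset\{t\in[0,T]:\lambda(t)=\pm 1\}$, proving \eqref{E3.12}.

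The only delicate point is the passage from the vanishing of the two integrals to the actual support inclusions, where one must rely on $\lambda$ being continuous to guarantee that $\{\lambda<1\}$ and $\{\lambda>-1\}$ are open (and their complements closed) so that zero measure on them translates into a support statement. Everything else is a routine manipulation of the subdifferential inequality and duality between $\CT$ and $\MT$.
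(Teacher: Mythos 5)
Your proposal is correct and follows essentially the same route as the paper: extract $\int_0^T\lambda\,d\bar u=\|\bar u\|_\MT$ from the choices $u=0$ and $u=2\bar u$, bound $\|\lambda\|_\CT\le 1$ by duality (which in the paper is done explicitly by testing with $\pm\delta_s$), and then rewrite the equality via the Jordan decomposition as $\int_0^T(1-\lambda)\,d\bar u^+ +\int_0^T(1+\lambda)\,d\bar u^- =0$ to obtain both \eqref{E3.11} and the support inclusions \eqref{E3.12}. The only cosmetic differences are that you get $\|\lambda\|_\CT=1$ by dividing the pinched inequality by $\|\bar u\|_\MT>0$ rather than through the paper's identity $\int_0^T(1-|\lambda|)\,d|\bar u|=0$, and that you spell out the open-set argument behind the support statement, which the paper leaves implicit.
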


\begin{proof}
By definition of the 
{subdifferential}, we have
\begin{equation}
\langle u - \bar u,\lambda\rangle_{\MT,\CT} + j(\bar u) \le j(u) \ \ \forall u \in \MT.
\label{E3.13}
\end{equation}
Taking $u = 0$ and $u = 2\bar u$, respectively, in \eqref{E3.13} we deduce that $\langle \bar u,\lambda\rangle_{\MT,\CT} = j(\bar u)$. Hence \eqref{E3.13} implies that
\[
\langle u,\lambda\rangle_{\MT,\CT} \le  j(u) = \|u\|_\MT\ \ \forall u \in \MT.
\]
Now, for every $s \in [0,T]$ we take $u = \pm\delta_{s}$ in the above inequality. This leads to
\begin{equation}\label{R1E313}
|\lambda(s)| \le 1\ \ \forall s \in 
{[0,T]}.
\end{equation}
By the established properties, we find
{
\[
\|\bar u\|_\MT = j(\bar u) = \int_0^T\lambda(s)\dbus \le \int_0^T|\lambda(s)|\dbuas \le \int_0^T\dbuas = \|\bar u\|_\MT,
\]
therefore
\[
\int_0^T[1 - |\lambda(s)|]\dbuas = 0 \text{ and } \int_0^T\lambda(s)\dbus = \int_0^T|\lambda(s)|\dbuas.
\]
The 
{second identity and  \eqref{R1E313} imply} \eqref{E3.11}. Let us prove \eqref{E3.12}. From \eqref{E3.11} we infer
\begin{align*}
&0 \le \int_0^T(1 - \lambda(s))\, \textup{d}\bar u^+(s) + \int_0^T(1 + \lambda(s))\, \textup{d}\bar u^-(s)\\
& = \int_0^T \dbuas - \int_0^T \lambda(s)\, \dbus = \int_0^T(1 - |\lambda(s)|)\, \dbuas = 0.
\end{align*}
Hence, we get
\[
\int_0^T(1 - \lambda(s))\, \textup{d}\bar u^+(s) = \int_0^T(1 + \lambda(s))\, \textup{d}\bar u^-(s) = 0,
\]
which proves \eqref{E3.12}.}
\end{proof}

Now we study the directional derivatives of $j$. Following \cite{Casas-Kunisch2014}, we introduce another notation. Given $u, v \in \MT$, we consider the Lebesgue decomposition of $v$ with respect to $|u|$: $v = v_a + v_s$, where $v_a$ is the absolutely continuous part of $v$ with respect to $|u|$ and $v_s$ is the singular part; see, for instance, \cite[Chapter 6]{Rudin70}. We denote by $h_v$ the Radon-Nikodym derivative of $v_a$ with respect to $|u|$,  i.e. $dv_a = h_vd|u|$. Then we have
\begin{equation}
\|v\|_\MT = \|v_a\|_\MT + \|v_s\|_\MT = \int_0^T|h_v(s)|\duas + \|v_s\|_\MT.
\label{E3.14}
\end{equation}

Moreover, it is obvious that $u$ is absolutely continuous with respect to $|u|$. We have $du = h \, d|u|$, $du^+ = h^+ \, d|u|$, and $du^- = h^- \, d|u|$, where $|h(s)| = 1$ for every $s \in [0,T]$.

In the next statement, we derive the expression for the directional derivatives of $j$.

\begin{proposition}
For every $u, v \in \MT$, we have
\begin{equation}
j'(u;v) = \int_0^Th_v(s)\dus + \int_0^T\, \textup{d}|v_s|(s).
\label{E3.15}
\end{equation}
\label{P3.3}
\end{proposition}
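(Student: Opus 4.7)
The plan is to compute the one-sided directional derivative
\[
j'(u;v) = \lim_{t\to 0^+}\frac{\|u+tv\|_\MT - \|u\|_\MT}{t}
\]
directly from the Lebesgue decomposition of $v$ with respect to $|u|$, and then reduce everything to a scalar limit handled by dominated convergence.

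First, I would exploit the singularity of $v_s$. By definition of singularity, there is a Borel set $N\subset[0,T]$ with $|u|(N)=0$ and $|v_s|([0,T]\setminus N)=0$. On $N$ both $u$ and $v_a$ vanish identically as measures, while off $N$ the measure $v_s$ vanishes. Consequently, for every $t\in\mathbb{R}$,
\[
u+tv = (u+tv_a) + tv_s,
\]
where the two summands are mutually singular (the first is absolutely continuous with respect to $|u|$, the second lives on $N$). Therefore the total variation decouples,
\[
\|u+tv\|_\MT = \|u+tv_a\|_\MT + t\,\|v_s\|_\MT
\]
for $t\ge 0$. Using $du = h\,d|u|$ with $|h|=1$ $|u|$-a.e., and $dv_a = h_v\,d|u|$, the absolutely continuous part rewrites as
\[
\|u+tv_a\|_\MT = \int_0^T |h(s)+t\,h_v(s)|\,d|u|(s),
\qquad \|u\|_\MT = \int_0^T |h(s)|\,d|u|(s).
\]

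Next, I would pass to the limit in the difference quotient of the $|u|$-integrals. Since $|h|=1$ $|u|$-a.e., the pointwise derivative of $t\mapsto |h+t h_v|$ at $t=0^+$ equals $\mathrm{sign}(h)\,h_v = h\,h_v$. The triangle inequality gives the uniform domination
\[
\left|\frac{|h(s)+t\,h_v(s)|-|h(s)|}{t}\right| \le |h_v(s)| \quad \text{for all } t>0,
\]
and $h_v\in L^1([0,T],d|u|)$ by construction of the Lebesgue decomposition. The dominated convergence theorem then yields
\[
\lim_{t\to 0^+}\frac{\|u+tv_a\|_\MT - \|u\|_\MT}{t}
= \int_0^T h(s)\,h_v(s)\,d|u|(s)
= \int_0^T h_v(s)\,du(s),
\]
where the last equality uses $du = h\,d|u|$. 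Combining with the singular contribution $\|v_s\|_\MT = \int_0^T d|v_s|(s)$ gives exactly \eqref{E3.15}.

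The only subtle step is the additivity $\|u+tv\|_\MT = \|u+tv_a\|_\MT + t\|v_s\|_\MT$, which I expect to be the main technical point; everything else is a one-line dominated convergence argument. This additivity, however, is a direct consequence of the Hahn-type set $N$ on which $v_s$ is concentrated and $u$, $v_a$ vanish, and is the same device used in analogous computations in \cite{Casas-Kunisch2014,CKK2016}.
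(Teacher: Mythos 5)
Your argument is correct: the decoupling $\|u+tv\|_\MT=\|u+tv_a\|_\MT+t\|v_s\|_\MT$ obtained from the set carrying $v_s$ (on which $|u|$ and $|v_a|$ vanish), combined with the dominated-convergence limit of $t^{-1}\bigl(|h+th_v|-|h|\bigr)$ using $|h|=1$ and $h_v\in L^1(d|u|)$, gives exactly \eqref{E3.15}. The paper itself omits the proof and refers to \cite[Proposition 3.3]{Casas-Kunisch2014}, whose argument is essentially the same as yours, so your proposal matches the intended proof.
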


We refer to \cite[Proposition 3.3]{Casas-Kunisch2014} for the proof.

\begin{theorem}
Let $\bar u$ be a local solution of \Pb. Then there exist $\bar y \in Y \cap C(\bar Q \cup \bar Q_-)$, $\bar\varphi \in H^1(Q) \cap C(\bar Q \cup \bar Q_+)$ and $\bar\lambda \in \CT \cap \partial j(\bar u)$ such that
\begin{align}
&\left\{\begin{array}{rcll}\displaystyle\frac{\partial\bar y}{\partial t} - \Delta\bar y + R(\bar y) &=
&\displaystyle K[\bar u]\bar y + g_{\bar u} \ &\text{ in } Q,\vspace{2mm}\\
\displaystyle \partial_n\bar y &=& 0 \ &\text{ on } \Sigma,\\[1ex]
\bar y(x,t) &=& y_0(x,t) \ &\text{ in }Q_-, \end{array}\right.\label{E3.16}\\
&\left\{\begin{array}{rcll}\displaystyle -\frac{\partial\bar\varphi}{\partial t} - \Delta\bar\varphi + R'(\bar y)\bar\varphi &=&\displaystyle K^*[\bar u]\bar\varphi + \bar y - y_d \ &\text{ in } Q,\vspace{2mm}\\ \displaystyle \partial_n\bar\varphi &=& 0 \ &\text{ on } \Sigma,\\[1ex]\bar\varphi &=& 0 \ &\text{ in } Q_+, \end{array}\right.\label{E3.17}\\
&\bar\lambda(s) = - \frac{1}{\nu}\int_{-s}^T\int_\Omega\bar y(x,t)\bar\varphi(x,t+s)\dx\dt\quad \forall s \in [0,T], \label{E3.18}
\end{align}
where $Q_+ = \Omega \times [T,2T]$.
\label{T3.3}
\end{theorem}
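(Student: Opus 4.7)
The plan is to turn local optimality into a variational inequality, then rewrite $F'(\bar u)v$ as the action of a continuous function $\bar\Lambda \in \CT$ on $v$, and finally read off $\bar\lambda = -\bar\Lambda/\nu$ from the standard characterization of the subdifferential of a continuous convex function via its directional derivatives.

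First, I would fix an arbitrary $v\in \MT$ and exploit local optimality. For $\rho>0$ small enough, $\bar u+\rho v$ lies in an $\MT$-ball around $\bar u$ and also in an $H^1(0,T)^*$-ball (since $\MT\hookrightarrow H^1(0,T)^*$ continuously), so in either sense of local minimum we have $J(\bar u+\rho v)\ge J(\bar u)$. Dividing by $\rho$ and passing to the limit, Theorem \ref{T3.2} (differentiability of $F$) and Proposition \ref{P3.3} (directional derivative of $j$) deliver
\[
F'(\bar u)v + \nu j'(\bar u;v) \ge 0 \qquad \forall v \in \MT.
\]
Equations \eqref{E3.16} and \eqref{E3.17} for $\bar y := y_{\bar u}$ and $\bar\varphi := \varphi_{\bar u}$ are already provided by Theorems \ref{T2.1}--\ref{T2.2} and Proposition \ref{P3.1}. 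Extending $\bar y$ by $y_0$ into $Q_-$ and $\bar\varphi$ by $0$ into $Q_+$ yields continuous functions on $\bar Q\cup \bar Q_-$ and $\bar Q\cup \bar Q_+$, respectively, because the initial and terminal conditions match.

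The core step is to express $F'(\bar u)v$ as an integral against $dv$. With the extension of $\bar y$ described above, the identity $(K[v]\bar y + g_v)(x,t) = \int_0^T \bar y(x,t-s)\dvs$ holds (cf.\ Remark \ref{R2.2}), so Theorem \ref{T3.2} gives
\[
F'(\bar u)v = \int_Q \bar\varphi(x,t)\int_0^T \bar y(x,t-s)\dvs\dx\dt.
\]
The integrand is bounded and measurable on $\Omega\times[0,T]\times[0,T]$ with respect to the product of Lebesgue measure and $|v|$, so Fubini's theorem applies; the change of variables $\tau=t-s$ together with the fact that $\bar\varphi(x,\tau+s)=0$ whenever $\tau+s>T$ then produce
\[
F'(\bar u)v = \int_0^T \bar\Lambda(s)\dvs, \qquad \bar\Lambda(s) := \int_{-s}^T\!\int_\Omega \bar y(x,\tau)\bar\varphi(x,\tau+s)\dx\,\textup{d}\tau.
\]
A dominated-convergence argument, exploiting the uniform boundedness and continuity of the extended $\bar y$ and $\bar\varphi$, shows $\bar\Lambda \in \CT$.

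Setting $\bar\lambda:=-\bar\Lambda/\nu$, the variational inequality rewrites as
\[
\langle \bar\lambda, v\rangle_{\CT,\MT} \le j'(\bar u;v) \qquad \forall v\in\MT,
\]
which is precisely the characterization of elements of $\partial j(\bar u)$ for the continuous convex functional $j$. Hence $\bar\lambda \in \partial j(\bar u)\cap \CT$ and formula \eqref{E3.18} holds. The most delicate step will be the justification of the Fubini exchange and of the change of variables in the presence of possible atoms of $v$ and the ensuing discontinuities of $K[v]\bar y$ and $g_v$ in $t$; working with the continuous extensions of $\bar y$ and $\bar\varphi$ renders the combined integrand bounded continuous in the relevant variables and thereby resolves this issue.
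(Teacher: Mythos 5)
Your proposal is correct and follows essentially the same route as the paper: $\bar y$, $\bar\varphi$ come from Theorem \ref{T2.1} and Proposition \ref{P3.1}, the key step is the identical Fubini/change-of-variables computation (using the zero extension of $\bar\varphi$ to $Q_+$) that turns $F'(\bar u)v$ into $-\nu\int_0^T\bar\lambda(s)\dvs$, and local optimality yields the subgradient inequality. The only (harmless) difference is that you pass through $j'(\bar u;\cdot)$ from Proposition \ref{P3.3} and the support-function characterization of $\partial j(\bar u)$, whereas the paper bounds the difference quotient directly by convexity of $j$ and reads off the subdifferential inequality $\int_0^T\bar\lambda\,\textup{d}(u-\bar u) + j(\bar u)\le j(u)$; the two pieces of convex analysis are equivalent here.
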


\begin{proof}
The existence and uniqueness of solutions to \eqref{E3.16} and \eqref{E3.17} have  already been discussed in Theorem \ref{T2.1} and Proposition \ref{P3.1}. Notice that the condition $\bar\varphi(x,T) = 0$ in $\Omega$ has been extended to $\bar\varphi(x,t) = 0$ in $Q_+ = \Omega \times [T,2T]$. It is obvious that this extension by $0$ defines a continuous function in $\bar Q \cup \bar Q_+$. Now, we define $\bar\lambda$ by \eqref{E3.18}. The continuity of $\bar y$ and $\bar\varphi$ implies that $\bar\lambda \in \CT$. It remains to prove that $\bar\lambda \in \partial j(\bar u)$. To this end, we use that $\bar u$ is a local minimizer of \Pb. Hence, for any $u \in \MT$, we get from the convexity of $j$ and \eqref{E3.3} that
\begin{align}
&0 \le \lim_{\rho \searrow 0}\frac{J(\bar u + \rho(u - \bar u)) - J(\bar u)}{\rho} \le F'(\bar u)(u - \bar u) + \nu j(u) - \nu j(\bar u)\notag\\
&= \int_Q\bar\varphi(K[u - \bar u]\bar y + g_{u - \bar u})\dx\dt + \nu j(u) - \nu j(\bar u).\label{E3.19}
\end{align}

\begin{align*}
&\int_Q\bar\varphi\, (K[u - \bar u]\bar y + g_{u - \bar u})\,\dx\,\dt\\
&= \int_\Omega\int_0^T\bar\varphi(x,t)\Big(\int_0^T\bar y(x,t-s)\, \textup{d}(u - \bar u)(s)\Big)\,\dt\,\dx\\
&= \int_0^T\int_\Omega\int_0^T\bar y(x,t-s)\bar\varphi(x,t)\dt\, \dx\, \textup{d}(u - \bar u)(s)\\
&= \int_0^T\int_\Omega\int_{-s}^{T-s}\bar y(x,\tau)\bar\varphi(x,\tau + s)\dtau\, \dx\, \textup{d}(u - \bar u)(s)\\
&=\int_0^T\left(\int_{-s}^T\int_\Omega\bar y(x,\tau)\bar\varphi(x,\tau+s)\dx\,\dtau\,\right) \textup{d}(u - \bar u)(s)\\
& = -\nu\int_0^T\bar\lambda(s) \, \textup{d}(u - \bar u)(s).
\end{align*}
Combining this with \eqref{E3.19}, we find
\[
\int_0^T\bar\lambda(s) \, \textup{d}(u - \bar u)(s) + j(\bar u) \le j(u)\quad \forall u \in \MT.
\]
This is the definition of $\bar\lambda \in \partial j(\bar u)$.
\end{proof}

From Proposition \ref{P3.2} and Theorem \ref{T3.3} we deduce the following sparsity structure of the optimal control $\bar u$.

\begin{corollary}
Let $\bar u$ be a local minimum of \Pb and let $\bar y$, $\bar\varphi$ and $\bar\lambda$ satisfy the optimality system \eqref{E3.16}-\eqref{E3.18}, then 
{if $\bar u \not\equiv 0$}
\begin{align}
&
{\ \|\bar\lambda\|_\CT = 1},\label{E3.20}\\
& \begin{array}{l}\text{supp}(\bar u^+) \subset \{t \in [0,T] : \bar\lambda(t) = +1\},\\\text{supp}(\bar u^-) \subset \{t \in [0,T] : \bar\lambda(t) = -1\},\end{array}\label{E3.21}
\end{align}
where $\bar u = \bar u^+ - \bar u^-$ is the Jordan decomposition of the measure $\bar u$.
\label{C3.1}
\end{corollary}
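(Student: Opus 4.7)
The plan is almost purely a combination of two results already established in the paper, so the proof is short and contains no real obstacle. First, I would invoke Theorem \ref{T3.3} to obtain the existence of $\bar\lambda \in C[0,T]$ satisfying $\bar\lambda \in \partial j(\bar u)$, where $\bar\lambda$ is explicitly given by the formula \eqref{E3.18} in terms of $\bar y$ and $\bar\varphi$. This gives the regularity and the subdifferential membership needed to apply the abstract statement from Section \ref{S4}.

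Second, since by hypothesis $\bar u \not\equiv 0$, all the assumptions of Proposition \ref{P3.2} (namely $\bar u \neq 0$ and $\bar\lambda \in C[0,T] \cap \partial j(\bar u)$) are met. Applying Proposition \ref{P3.2} with $u := \bar u$ and $\lambda := \bar\lambda$ immediately yields the normalization \eqref{E3.20}, i.e.\ $\|\bar\lambda\|_{C[0,T]} = 1$, and the support inclusions \eqref{E3.21} for the Jordan decomposition $\bar u = \bar u^+ - \bar u^-$.

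The only point worth double-checking in the write-up is that the Jordan decomposition used in Proposition \ref{P3.2} is the same object appearing in the corollary, which is automatic since the Jordan decomposition of a regular Borel measure is unique. No additional computation is required, and there is no genuine difficulty: the entire content of the corollary is packaged in Proposition \ref{P3.2} once one has produced a continuous representative of an element of $\partial j(\bar u)$, and Theorem \ref{T3.3} has already produced such a representative from the optimality system \eqref{E3.16}--\eqref{E3.18}.
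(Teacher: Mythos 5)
Your proof is correct and is exactly the paper's argument: the corollary is stated as a direct consequence of Theorem \ref{T3.3} (which produces the continuous multiplier $\bar\lambda\in\CT\cap\partial j(\bar u)$ via \eqref{E3.18}) combined with Proposition \ref{P3.2} applied with $u:=\bar u\neq 0$ and $\lambda:=\bar\lambda$. Nothing further is needed.
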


\begin{proposition}
There exists $\bar\nu > 0$ such that $0$ is the only solution of \Pb 
{for every $\nu \ge \bar\nu$}.
\label{P3.4}
\end{proposition}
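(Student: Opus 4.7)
My plan is to compare the cost of a candidate minimizer $\bar u$ against the cost of the zero control, combine this with the necessary optimality condition from Corollary \ref{C3.1}, and derive a contradiction for large $\nu$.

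First, I would use that any solution $\bar u$ of \Pb satisfies $J(\bar u) \le J(0)$, which gives
\[
\nu \|\bar u\|_{\MT} \le J(0) = \frac{1}{2}\|y_0^* - y_d\|_{L^2(Q)}^2,
\]
where $y_0^*$ denotes the state associated with $u=0$. Hence $\|\bar u\|_{\MT} \le J(0)/\nu$. In particular, restricting to $\nu \ge 1$, we obtain a uniform bound $\|\bar u\|_{\MT} \le M_0 := J(0)$ that does not depend on the particular solution.

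Next I would invoke the uniform estimates on bounded subsets of $\MT$. By Theorem \ref{T2.1} there exists a constant $K_y$, depending only on $M_0$, $y_0$, and $R(0)$, such that $\|\bar y\|_{C(\bar Q)} \le K_y$ for every solution $\bar u$ with $\nu \ge 1$. This bounds $\|\bar y - y_d\|_{L^{\bar p}(Q)}$ by a constant $K_d$, and then Proposition \ref{P3.1} provides $K_\varphi$ such that $\|\bar \varphi\|_{C(\bar Q)} \le K_\varphi$ uniformly in $\nu \ge 1$. From the representation \eqref{E3.18} I would then estimate, for every $s \in [0,T]$,
\[
|\bar\lambda(s)| \le \frac{1}{\nu}\int_{-s}^T\int_\Omega |\bar y(x,t)|\,|\bar\varphi(x,t+s)|\dx\dt \le \frac{2T|\Omega|\,K_y K_\varphi}{\nu} =: \frac{C_0}{\nu},
\]
so $\|\bar\lambda\|_{\CT} \le C_0/\nu$.

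Finally I would close the argument by contradiction. Set $\bar\nu := \max(1, 2C_0)$. If $\nu \ge \bar\nu$ and $\bar u \not\equiv 0$, Corollary \ref{C3.1} forces $\|\bar\lambda\|_{\CT} = 1$, but the previous estimate gives $\|\bar\lambda\|_{\CT} \le C_0/\nu \le 1/2$, a contradiction. Therefore $\bar u \equiv 0$ is the only solution of \Pb whenever $\nu \ge \bar\nu$.

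The only subtle point is the need for the constants in Theorem \ref{T2.1} and Proposition \ref{P3.1} to be chosen independently of the particular minimizer; this is precisely the content of the ``fixed on bounded subsets of $\MT$'' clauses in those statements, and the preliminary a priori bound $\|\bar u\|_{\MT} \le M_0$ is what allows one to fix them uniformly. No other difficulty arises.
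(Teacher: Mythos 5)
Your argument is correct and takes essentially the same route as the paper: compare with the zero control to obtain $\|\bar u\|_\MT \le C/\nu$, use the constants of Theorem \ref{T2.1} and Proposition \ref{P3.1} being uniform on bounded sets of $\MT$ (for $\nu \ge 1$) to bound state and adjoint independently of $\nu$, conclude $\|\bar\lambda\|_\CT = O(1/\nu) < 1$ from \eqref{E3.18}, and invoke the sparsity conditions of Corollary \ref{C3.1}; the paper only differs cosmetically by working with $\|\bar y\|_{L^2(Q)}$ and $\|\bar\varphi\|_{C([0,T],L^2(\Omega))}$ instead of sup-norms. One tiny point to tidy: on $[-s,0]$ the integrand in \eqref{E3.18} involves the prehistory $y_0$, so your $K_y$ should be replaced by $\max\{K_y,\|y_0\|_{C(\bar Q_-)}\}$, which changes nothing in the conclusion.
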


\begin{proof}
Let $\bar u$ be a solution of \Pb. From the inequality $J(\bar u) \le J(0)$ we deduce that
\begin{equation}
\|\bar y\|_{L^2(Q)} \le C_1 < \infty\ \text{ and }\ \|\bar u\|_\MT \le \frac{C_2}{\nu}
\label{E3.22}
\end{equation}
for some constants independent of $\nu$. Arguing similarly as in the proof of inequality \eqref{E2.13}, we get from equation \eqref{E3.17} that
\begin{equation}
\|\bar\varphi\|_{C([0,T],L^2(\Omega))} \le C_3\|\bar y - y_d\|_{L^2(Q)} \le C_4.
\label{E3.23}
\end{equation}
According to \eqref{E3.22}, $C_3$ and $C_4$ are independent of $\nu \ge 1$. Now, from \eqref{E3.18}, \eqref{E3.22} and \eqref{E3.23} we get
\begin{align*}
|\bar\lambda(s)| &\le \frac{1}{\nu}\int_{-s}^T\|\bar y(t)\|_{L^2(\Omega)}\|\bar\varphi(t+s)\|_{L^2(\Omega)}\, \dt\\
&\le \frac{1}{\nu}\Big(\int_{-T}^T\|\bar y(t)\|_{L^2(\Omega)}\, \dt\Big)\|\bar\varphi\|_{C([0,T],L^2(\Omega))}\\
& = \frac{1}{\nu}\Big(\int_{-T}^0\|y_0(t)\|_{L^2(\Omega)}\, \dt + \int_{0}^T\|\bar y(t)\|_{L^2(\Omega)}\, \dt\Big)\|\bar\varphi\|_{C([0,T],L^2(\Omega))}\\
& \le \frac{\sqrt{T}}{\nu}\big(\|y_0\|_{L^2(Q_-)} + \|\bar y\|_{L^2(Q)}\big)\|\bar\varphi\|_{C([0,T],L^2(\Omega))} \le \frac{\sqrt{T}}{\nu}\big(\|y_0\|_{L^2(Q_-)} + C_1\big)C_4.
\end{align*}
If we take $\bar\nu > \max\big\{1,\sqrt{T}\big(\|y_0\|_{L^2(Q_-)} + C_1\big)C_4\big\}$ we infer that $|\bar\lambda(s)| < 1$ $\forall s \in [0,T]$. Then, \eqref{E3.21} implies that $\bar u \equiv 0$.
\end{proof}
\section{Discretization of the Control Space}
\label{S5}
\setcounter{equation}{0}

In this section  we are going to consider the approximation of  $\MT$ by finite dimensional subspaces $\U$. Associated to each space $\U$ we define a new problem \Pbt. Then, we analyze the convergence of the solutions of \Pbt . First we consider a grid of points $0 = t_0 < t_1 < \ldots < t_{N_\tau} = T$. We set $\tau_k = t_k-t_{k-1}$ for $1 \le k \le N_\tau$ and $\tau = \max_{1 \le k \le N_\tau}\tau_k$. 
{We also set $I_k = (t_{k - 1},t_k]$ for $1 \le k \le N_\tau$, and $I_0 = \{0\}$.} Associated with this grid we define the space
\[
\U = \{u_\tau = \sum_{k = 0}^{N_\tau}u_k\delta_{t_k} : (u_k)_{k = 0}^{N_\tau} \in \mathbb{R}^{N_\tau + 1}\},
\]
where $\delta_{t_k}$ denotes the Dirac measure centered at $t_k$.
Thus, $\U$ has dimension $N_\tau + 1$ and $\U$ is a vector subspace of $\MT$. Now, we introduce the linear mapping
\[
\Lambda_\tau:\MT \longrightarrow \U \ \text{ defined by }\ \Lambda_\tau u = \sum_{k = 0}^{N_\tau}u(I_k) \delta_{t_k}.
\]
The following proposition states some properties of this mapping.
\begin{proposition}
The following statements hold
\begin{enumerate}
\item $\|\Lambda_\tau u\|_\MT \le \|u\|_\MT$ $\forall u \in \MT$.
\item $\Lambda_\tau u \stackrel{*}{\rightharpoonup} u$ in $\MT$ $\forall u \in \MT$.
\item $\lim_{\tau \to 0}\|\Lambda_\tau u\|_\MT = \|u\|_\MT$ $\forall u \in \MT$.
\end{enumerate}
\label{P4.1}
\end{proposition}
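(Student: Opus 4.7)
The plan is to verify the three properties in order, each relying on the previous one.

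\textbf{For (1):} I would use the definition of $\Lambda_\tau u$ and the fact that $\{I_k\}_{k=0}^{N_\tau}$ is a partition of $[0,T]$. Since $\Lambda_\tau u = \sum_{k=0}^{N_\tau} u(I_k)\delta_{t_k}$, its total variation is
\[
\|\Lambda_\tau u\|_\MT = \sum_{k=0}^{N_\tau}|u(I_k)| \le \sum_{k=0}^{N_\tau}|u|(I_k) = |u|([0,T]) = \|u\|_\MT,
\]
using the elementary inequality $|u(I_k)| \le |u|(I_k)$ and additivity of $|u|$ on the disjoint partition.

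\textbf{For (2):} I would fix $\phi \in \CT$ and write
\[
\int_0^T\phi(s)\,\textup{d}(\Lambda_\tau u)(s) - \int_0^T\phi(s)\dus = \sum_{k=0}^{N_\tau}\int_{I_k}\bigl[\phi(t_k) - \phi(s)\bigr]\dus,
\]
exploiting that integration of $\phi$ against the point mass $u(I_k)\delta_{t_k}$ reproduces $\phi(t_k)u(I_k) = \int_{I_k}\phi(t_k)\,du(s)$. Since $[0,T]$ is compact, $\phi$ is uniformly continuous, so given $\varepsilon > 0$ there is $\delta > 0$ with $|\phi(t) - \phi(s)| \le \varepsilon$ whenever $|t-s| \le \delta$. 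For $\tau \le \delta$ and $s \in I_k$ one has $|t_k - s| \le \tau_k \le \tau \le \delta$, and thus the difference is bounded in absolute value by $\varepsilon \|u\|_\MT$. This yields weak-$*$ convergence.

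\textbf{For (3):} I would combine (1) and (2) via weak-$*$ lower semicontinuity of the measure norm: since $\Lambda_\tau u \stackrel{*}{\rightharpoonup} u$, one has $\|u\|_\MT \le \liminf_{\tau\to 0}\|\Lambda_\tau u\|_\MT$, while (1) gives $\limsup_{\tau\to 0}\|\Lambda_\tau u\|_\MT \le \|u\|_\MT$. Hence the limit exists and equals $\|u\|_\MT$. Lower semicontinuity follows from the duality formula $\|v\|_\MT = \sup\{\langle v,\phi\rangle : \phi \in \CT,\,\|\phi\|_\CT \le 1\}$ applied to $v = u$: each linear functional $v \mapsto \langle v,\phi\rangle$ is weak-$*$ continuous, so the supremum is weak-$*$ lower semicontinuous.

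No step looks genuinely hard; the main subtlety is just being careful that the $I_k$ genuinely partition $[0,T]$ (the convention $I_0 = \{0\}$, $I_k = (t_{k-1},t_k]$ is precisely what makes this work and ensures each atom of $u$ is assigned to exactly one $I_k$). All the estimates are routine once the partition structure and uniform continuity of $\phi$ are used correctly.
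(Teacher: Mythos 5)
Your proof is correct and follows essentially the same route as the paper: part (1) via $|u(I_k)|\le|u|(I_k)$ and additivity over the partition, part (2) via uniform continuity of the test function giving the bound $\varepsilon\|u\|_\MT$, and part (3) by combining (1) with weak-$*$ lower semicontinuity of the norm. The extra details you add (the duality argument for lower semicontinuity and the remark on the partition convention) only make explicit what the paper leaves implicit.
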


\begin{proof}
\textit{1. -} It is obtained as follows
\[
\|\Lambda_\tau u\|_\MT = \sum_{k = 0}^{N_\tau}|u(I_k)| \le \sum_{k = 0}^{N_\tau}|u|(I_k) = |u|([0,T]) = \|u\|_\MT.
\]

\textit{2. -} Let us take $y \in C[0,T]$. Given an arbitrary $\varepsilon > 0$, the continuity of $y$ implies that there exists $\tau_\varepsilon > 0$ such that
\begin{equation}
|y(t) - y(s)| < \varepsilon \ \ \forall s, t \in [0,T] \text{ such that } |t - s| < \tau_\varepsilon.
\label{E4.1}
\end{equation}
Then for every $\tau < \tau_\varepsilon$ we have
\begin{align*}
&|\langle u - \Lambda_\tau u,y\rangle| = \Big|\int_0^T y(s) \dus - \sum_{k = 0}^{N_\tau}y(t_k)u(I_k)\Big| = \Big|\sum_{k = 0}^{N_\tau}\int_{I_k}[y(s) - y(t_k)]\dus\Big|\\
&\le \sum_{k = 0}^{N_\tau}\int_{I_k}|y(s) - y(t_k)|\duas \le \varepsilon\|u\|_\MT.
\end{align*}
Since $y$ is an arbitrary element of $C[0,T]$, this proves that $\Lambda_\tau u \stackrel{*}{\rightharpoonup} u$ in $\MT$.

\textit{3. -} Combining \textit{2} and \textit{1} we get
\[
\|u\|_\MT \le \liminf_{\tau \to 0}\|\Lambda_\tau u\|_\MT \le \limsup_{\tau \to 0}\|\Lambda_\tau u\|_\MT \le\|u\|_\MT,
\]
which concludes the proof.
\end{proof}

Now, for every $\tau > 0$ we consider the control problem with discretized controls
\[
\Pbt \min_{u_\tau \in \U}  J(u_\tau) = \frac{1}{2}\int_Q|y_{u_\tau} - y_d|^2\dx\dt + \nu\sum_{k = 0}^{N_\tau}|u_k|,
\]
From Lemma \ref{L3.1} we deduce the continuity of the functional $J:\U \longrightarrow \mathbb{R}$. Therefore, taking into account that $\U$ is a finite dimensional vector space and $J$ is coercive on $\U$, we deduce the existence of at least one global solution $\bar u_\tau$ of \Pbt. Let us study the sparse structure of the solutions $\bar u_\tau$ of \Pbt. We denote by $j_\tau:\U \longrightarrow \mathbb{R}$ the restriction of $j$ to $\U$:
\[
j_\tau(u_\tau) = j(u_\tau) = \sum_{k= 0}^{N_\tau}|u_k|.
\]
We identify the dual of $\U$ with $\mathbb{R}^{N_\tau + 1}$ as follows:
\[
\forall \lambda_\tau = (\lambda_k)_{
{k = 0}}^{N_\tau} \in \mathbb{R}^{
{N_\tau + 1}} \text{ and } \forall u_\tau = \sum_{k = 0}^{N_\tau}u_k\delta_{t_k} \text{ we set } \langle\lambda_\tau,u_\tau\rangle = \sum_{k = 0}^{N_\tau}\lambda_ku_k.
\]
Then Proposition \ref{P3.2} is reformulated as follows.
\begin{proposition}
With the above notation, we have $\lambda_\tau \in \partial j_\tau(\bar u_\tau)$ if and only if the following identity holds
\begin{equation}
\lambda_k \left\{\begin{array}{ll} = +1 & \text{if } \bar u_k > 0,\\ = -1 & \text{if } \bar u_k < 0,\\ \in [-1,+1] & \text{if } \bar u_k = 0,\end{array}\right.\ \ 0 \le k \le N_\tau.
\label{E4.2}
\end{equation}
\label{P4.2}
\end{proposition}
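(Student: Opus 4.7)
The plan is to exploit the separable structure of $j_\tau$. Since $j_\tau(u_\tau) = \sum_{k=0}^{N_\tau} |u_k|$ and the duality pairing between $\U$ and its identified dual $\mathbb{R}^{N_\tau+1}$ is $\langle \lambda_\tau, u_\tau\rangle = \sum_{k=0}^{N_\tau} \lambda_k u_k$, both sides of the subgradient inequality
\[
\sum_{k=0}^{N_\tau} |u_k| \ge \sum_{k=0}^{N_\tau} |\bar u_k| + \sum_{k=0}^{N_\tau} \lambda_k (u_k - \bar u_k) \qquad \forall u_\tau = \sum_k u_k \delta_{t_k} \in \U
\]
decompose coordinatewise. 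I would split the proof into the two implications, each reduced to the one-dimensional subdifferential of $|\cdot|$ on $\mathbb{R}$.

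For necessity, I would test the inequality with vectors that agree with $\bar u_\tau$ at every node except a single index $k$, where the value is replaced by an arbitrary $s \in \mathbb{R}$. All terms with $j\neq k$ cancel and the inequality collapses to $|s| \ge |\bar u_k| + \lambda_k(s - \bar u_k)$ for every $s \in \mathbb{R}$, which is exactly the statement that $\lambda_k$ belongs to the subdifferential of $|\cdot|$ at $\bar u_k$. Specializing this one-dimensional inequality to $s = 0$ and $s = 2\bar u_k$ pins down $\lambda_k = 1$ when $\bar u_k > 0$ and $\lambda_k = -1$ when $\bar u_k < 0$; while for $\bar u_k = 0$ taking $s = \pm 1$ yields $|\lambda_k| \le 1$. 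This gives \eqref{E4.2}.

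For sufficiency, assume \eqref{E4.2} and observe that in each of the three cases the elementary one-dimensional inequality $|u_k| \ge |\bar u_k| + \lambda_k(u_k - \bar u_k)$ holds for every $u_k \in \mathbb{R}$ (it is $|u_k| \ge \pm u_k$ when $\bar u_k \ne 0$, and $|u_k| \ge \lambda_k u_k$ with $|\lambda_k|\le 1$ when $\bar u_k = 0$). Summing over $k = 0,\dots,N_\tau$ reproduces the full subgradient inequality for $j_\tau$ and therefore $\lambda_\tau \in \partial j_\tau(\bar u_\tau)$.

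I do not anticipate any genuine obstacle: the proposition is essentially the coordinatewise version of Proposition~\ref{P3.2} and, thanks to the separability of $j_\tau$ and the finite-dimensional nature of $\U$, the argument does not require the delicate analysis of Jordan decompositions and supports used there. The only point deserving explicit mention is the identification of $\U^*$ with $\mathbb{R}^{N_\tau+1}$ via the prescribed pairing, after which the whole claim reduces to the classical subdifferential calculus of the absolute value on the real line.
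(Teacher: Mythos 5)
Your argument is correct and coincides with the paper's own proof: both reduce the subgradient inequality for $j_\tau$, via the separable structure and the identification of $\U^*$ with $\mathbb{R}^{N_\tau+1}$, to the coordinatewise inequality $\lambda_k(u_k-\bar u_k)+|\bar u_k|\le |u_k|$ and then to the one-dimensional subdifferential of the absolute value. You merely spell out the test vectors and the case analysis that the paper treats as obvious.
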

\begin{proof}
By definition of the subdifferential we have that $\lambda_\tau \in \partial j_\tau(\bar u_\tau)$ if and only if
\[
\sum_{k = 0}^{N_\tau}\lambda_k(u_k - \bar u_k) +\sum_{k=0}^{N_\tau}|\bar u_k| \le \sum_{k=0}^{N_\tau}|u_k|\quad \forall u_\tau \in \U.
\]
The above relation is equivalent to
\[
\lambda_k(u_k - \bar u_k) + |\bar u_k| \le |u_k|\quad \forall \, 0 \le k \le N_\tau \text{ and } \forall u_\tau \in \U.
\]
Obviously, the above inequalities are equivalent to \ref{E4.2}.
\end{proof}

Using this proposition, the following theorem can be proved as Theorem \ref{T3.3}.
\begin{theorem}
Let $\bar u_\tau$ be a local solution of \Pbt. Then there exist $\bar y_\tau \in Y \cap C(\bar Q \cup \bar Q_-)$, $\bar\varphi_\tau \in H^1(Q) \cap C(\bar Q \cup \bar Q_+)$ and $\bar\lambda_\tau \in \partial j_\tau(\bar u_\tau)$ such that
\begin{align}
&\left\{\begin{array}{rcll}\displaystyle\frac{\partial\bar y_\tau}{\partial t} - \Delta\bar y_\tau + R(\bar y_\tau) &=
&\displaystyle K[\bar u_\tau]\bar y_\tau + g_{\bar u_\tau} \ &\text{ in } Q,\vspace{2mm}\\
\displaystyle \partial_n\bar y_\tau &=& 0 \ &\text{ on } \Sigma,\\[1ex]
\bar y_\tau(x,t) &=& y_0(x,t) \ &\text{ in }Q_-, \end{array}\right.\label{E4.3}\\
&\left\{\begin{array}{rcll}\displaystyle -\frac{\partial\bar\varphi_\tau}{\partial t} - \Delta\bar\varphi_\tau + R'(\bar y_\tau)\bar\varphi_\tau &=&\displaystyle K^*[\bar u_\tau]\bar\varphi_\tau + \bar y_\tau - y_d \ &\text{ in } Q,\vspace{2mm}\\ \displaystyle \partial_n\bar\varphi_\tau &=& 0 \ &\text{ on } \Sigma,\\[1ex]\bar\varphi_\tau &=& 0 \ &\text{ in } Q_+, \end{array}\right.\label{E4.4}\\
&\bar\lambda_k = - \frac{1}{\nu}\int_{-t_k}^T\int_\Omega\bar y_\tau(x,t)\bar\varphi_\tau(x,t+t_k)\dx\dt\quad \forall\, 0 \le k \le N_\tau. \label{E4.5}
\end{align}
\label{T4.1}
\end{theorem}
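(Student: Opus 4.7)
The plan is to mirror the proof of Theorem \ref{T3.3}, exploiting that $\U \subset \MT$ so that all analytic results from Section 3 apply directly to $\bar u_\tau$. Existence and uniqueness of $\bar y_\tau$ solving \eqref{E4.3} follow from Theorem \ref{T2.1}, with the regularity $\bar y_\tau \in Y \cap C(\bar Q \cup \bar Q_-)$ obtained by extending $\bar y_\tau$ by $y_0$ on $Q_-$. Likewise, the existence, uniqueness, and continuity of $\bar\varphi_\tau$ solving \eqref{E4.4} follow from Proposition \ref{P3.1}, after extending $\bar\varphi_\tau$ by $0$ on $Q_+$, which yields a continuous function on $\bar Q \cup \bar Q_+$ since $\bar\varphi_\tau(\cdot,T)=0$.

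For the multiplier relation \eqref{E4.5}, I would first define $\bar\lambda_k$ by the right-hand side of \eqref{E4.5} and then show that $\bar\lambda_\tau = (\bar\lambda_k)_{k=0}^{N_\tau} \in \partial j_\tau(\bar u_\tau)$ by verifying the characterization of Proposition \ref{P4.2}. Since $\bar u_\tau$ is a local minimizer of \Pbt in the finite dimensional space $\U$ and $j_\tau$ is convex, for every $u_\tau \in \U$ we obtain, as in \eqref{E3.19},
\[
0 \le F'(\bar u_\tau)(u_\tau - \bar u_\tau) + \nu j_\tau(u_\tau) - \nu j_\tau(\bar u_\tau).
\]
Using Theorem \ref{T3.2} and repeating verbatim the Fubini/change-of-variables computation from the proof of Theorem \ref{T3.3}, one gets
\[
F'(\bar u_\tau)(u_\tau - \bar u_\tau) = \int_0^T \Big(\int_{-s}^T\int_\Omega \bar y_\tau(x,\tau)\bar\varphi_\tau(x,\tau+s)\dx\dtau\Big)\, \textup{d}(u_\tau-\bar u_\tau)(s).
\]

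The crucial discrete step is to exploit that $u_\tau - \bar u_\tau = \sum_{k=0}^{N_\tau}(u_k - \bar u_k)\delta_{t_k}$, so integration against this measure is a finite sum that evaluates the inner integral at $s = t_k$, yielding exactly $-\nu \bar\lambda_k$ by the definition \eqref{E4.5}. The variational inequality therefore reduces to
\[
\sum_{k=0}^{N_\tau}\bar\lambda_k(u_k - \bar u_k) + j_\tau(\bar u_\tau) \le j_\tau(u_\tau)\quad \forall u_\tau \in \U,
\]
which is precisely the definition of $\bar\lambda_\tau \in \partial j_\tau(\bar u_\tau)$.

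I do not expect any serious obstacle: the only thing one must be careful about is that the evaluation of the function $s \mapsto \int_{-s}^T\int_\Omega \bar y_\tau(x,\tau)\bar\varphi_\tau(x,\tau+s)\dx\dtau$ at $s = t_k$ is well-defined and continuous, which follows from the continuity of $\bar y_\tau$ on $\bar Q \cup \bar Q_-$ and of $\bar\varphi_\tau$ on $\bar Q \cup \bar Q_+$ established in the first step. This continuity is precisely why the extensions of $\bar y_\tau$ and $\bar\varphi_\tau$ to $Q_-$ and $Q_+$ respectively were included in the statement.
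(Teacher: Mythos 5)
Your proposal is correct and follows essentially the same route as the paper, which simply notes that Theorem \ref{T4.1} "can be proved as Theorem \ref{T3.3}" using the discrete subdifferential characterization of Proposition \ref{P4.2}: you apply Theorem \ref{T2.1} and Proposition \ref{P3.1} for $\bar y_\tau$ and $\bar\varphi_\tau$, repeat the Fubini computation from the proof of Theorem \ref{T3.3}, and exploit that integration against elements of $\U$ reduces to a finite sum at the nodes $t_k$, yielding the discrete subgradient inequality. Your closing remark on the continuity needed to evaluate the kernel at $s=t_k$ is exactly the right point of care, so nothing is missing.
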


Combining Proposition \ref{P4.2} and \eqref{E4.5} we deduce the following corollary.
\begin{corollary}
Let $\bar u_\tau$ be a local minimum of 
{\Pbt} with $\bar u_\tau\not\equiv 0$ and let $\bar y_\tau$, $\bar\varphi_\tau$ and $\bar\lambda_\tau$ satisfy \eqref{E4.3}-\eqref{E4.5}, then
\begin{align}
&\max_{0\leq k\leq N_{\tau}} |\bar\lambda_k| = 1,\label{E4.6}\\
&\left\{ \begin{array}{l}\bar u_k > 0  \Rightarrow \bar\lambda_k = +1,\\\bar u_k < 0 \Rightarrow \bar\lambda_k = -1.\end{array}\right.\label{E4.7}
\end{align}
\label{C4.1}
\end{corollary}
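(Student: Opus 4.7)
The plan is to read off the conclusion by combining the characterization of the subdifferential in Proposition \ref{P4.2} with the fact, established in Theorem \ref{T4.1}, that the multiplier $\bar\lambda_\tau$ whose components are given by \eqref{E4.5} belongs to $\partial j_\tau(\bar u_\tau)$. No further computation with the adjoint state or the integral formula \eqref{E4.5} should be needed: once we know $\bar\lambda_\tau \in \partial j_\tau(\bar u_\tau)$, the pointwise conditions \eqref{E4.2} do all the work.

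First I would dispatch the sign implications \eqref{E4.7}. For any index $k$ with $\bar u_k > 0$, condition \eqref{E4.2} forces $\bar\lambda_k = +1$, and analogously $\bar u_k < 0$ forces $\bar\lambda_k = -1$. This is nothing more than reading off the first two branches of \eqref{E4.2}.

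Next I would establish \eqref{E4.6}. The third branch of \eqref{E4.2} gives $|\bar\lambda_k| \le 1$ for every $k$ with $\bar u_k = 0$, and the first two branches give $|\bar\lambda_k| = 1$ for every $k$ with $\bar u_k \ne 0$. Hence $\max_{0 \le k \le N_\tau} |\bar\lambda_k| \le 1$ unconditionally. The hypothesis $\bar u_\tau \not\equiv 0$ means that there exists at least one index $k_0 \in \{0,\ldots,N_\tau\}$ with $\bar u_{k_0} \ne 0$, for which $|\bar\lambda_{k_0}| = 1$ by the previous observation. Combining these two inequalities gives \eqref{E4.6}.

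There is no genuine obstacle here; the only thing to be careful about is the role of the nonzero assumption on $\bar u_\tau$, which is precisely what is needed to guarantee that the supremum $1$ is attained rather than only an upper bound. The structural analogy with Corollary \ref{C3.1} in the continuous setting, together with Proposition \ref{P4.2} replacing Proposition \ref{P3.2}, makes the argument a direct transcription to the discretized framework.
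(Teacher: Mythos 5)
Your proposal is correct and follows exactly the paper's (essentially one-line) argument: the corollary is obtained by combining the subdifferential characterization \eqref{E4.2} of Proposition \ref{P4.2} with the fact from Theorem \ref{T4.1} that $\bar\lambda_\tau\in\partial j_\tau(\bar u_\tau)$, the hypothesis $\bar u_\tau\not\equiv 0$ serving only to ensure the maximum in \eqref{E4.6} is attained.
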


Finally, we analyze the convergence of the above discretization.

\begin{theorem}
Let $\{\bar u_{\tau}\}_{\tau \searrow 0}$ be a sequence of discrete controls such that every control $\bar u_{\tau}$ is a solution  of \Pbt. This sequence is bounded in $\MT$. Any weak$^*$ limit point of a subsequence is a solution of \Pb, and $J(\bar u_\tau) \to \inf\Pb$ as $\tau \searrow 0$. In addition, if $\tau_j \to 0$ and $u_{\tau_j} \stackrel{*}{\rightharpoonup}\bar u$ in $\MT$, then $\lim_{j \to \infty}\|\bar u_{\tau_j}\|_\MT = \|\bar u\|_\MT$ and $\lim_{j \to \infty}\|\bar y_{\tau_j} - \bar y\|_Y = 0$, where $\bar y_{\tau_j}$ and $\bar y$ denote the states associated to $\bar u_{\tau_j}$ and $\bar u$, respectively.
\label{T4.2}
\end{theorem}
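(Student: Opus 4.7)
The plan is to combine the state convergence result of Lemma \ref{L3.1} with the approximation properties of $\Lambda_\tau$ from Proposition \ref{P4.1}, using the admissibility of $0$ in every \Pbt to get a priori bounds, and the weak$^*$ lower semicontinuity of the measure norm.

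First, I would establish the boundedness of $\{\bar u_\tau\}$. Since $0 \in \U$ for every $\tau$, optimality of $\bar u_\tau$ for \Pbt yields
\[
\nu \|\bar u_\tau\|_\MT \le J(\bar u_\tau) \le J(0),
\]
so the family is uniformly bounded in $\MT$. By the Banach--Alaoglu theorem, any subsequence admits a further weak$^*$ convergent subsequence; pick one, $\bar u_{\tau_j} \stackrel{*}{\rightharpoonup} u^*$ in $\MT$. Lemma \ref{L3.1} then gives $\bar y_{\tau_j} \to y_{u^*}$ in $Y$, and in particular in $L^2(Q)$, so $\frac{1}{2}\int_Q |\bar y_{\tau_j} - y_d|^2 \dx\dt \to \frac{1}{2}\int_Q |y_{u^*} - y_d|^2 \dx\dt$. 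Combined with the weak$^*$ lower semicontinuity of $\|\cdot\|_\MT$ (it is the supremum over $\phi \in C[0,T]$ with $\|\phi\|_\CT \le 1$ of the weak$^*$-continuous linear forms $\langle \cdot,\phi\rangle$), we obtain
\[
J(u^*) \le \liminf_{j \to \infty} J(\bar u_{\tau_j}).
\]

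Next, I would prove optimality of $u^*$ by comparison against arbitrary $u \in \MT$. Since $\Lambda_{\tau_j}u \in \U_{\tau_j}$ is admissible for \Pbtj, optimality of $\bar u_{\tau_j}$ gives $J(\bar u_{\tau_j}) \le J(\Lambda_{\tau_j}u)$. Now Proposition \ref{P4.1} yields $\Lambda_{\tau_j}u \stackrel{*}{\rightharpoonup} u$ in $\MT$ and $\|\Lambda_{\tau_j}u\|_\MT \to \|u\|_\MT$; combining the first assertion with Lemma \ref{L3.1} gives $y_{\Lambda_{\tau_j}u} \to y_u$ in $Y$, so $\int_Q |y_{\Lambda_{\tau_j}u} - y_d|^2\dx\dt \to \int_Q |y_u - y_d|^2\dx\dt$. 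Together,
\[
\limsup_{j\to\infty} J(\bar u_{\tau_j}) \le \lim_{j\to\infty} J(\Lambda_{\tau_j}u) = J(u).
\]
Chaining these inequalities, $J(u^*) \le \liminf J(\bar u_{\tau_j}) \le \limsup J(\bar u_{\tau_j}) \le J(u)$ for every $u \in \MT$, so $u^*$ solves \Pb.

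The convergence $J(\bar u_\tau) \to \inf \Pb$ for the full sequence then follows by a standard argument: given $\varepsilon > 0$, pick $u^\varepsilon$ with $J(u^\varepsilon) \le \inf \Pb + \varepsilon$; the chain above applied with $u = u^\varepsilon$ shows $\limsup_{\tau \to 0} J(\bar u_\tau) \le \inf \Pb + \varepsilon$, while $J(\bar u_\tau) \ge \inf \Pb$ trivially. Taking $u = u^\varepsilon$ without passing to subsequences is legitimate because for every subsequence we can extract a weak$^*$ convergent subsubsequence and apply the preceding step. Finally, under the assumption $\bar u_{\tau_j} \stackrel{*}{\rightharpoonup} \bar u$, having identified $\bar u$ as a minimizer and knowing $J(\bar u_{\tau_j}) \to J(\bar u)$ together with convergence of the tracking term $\frac{1}{2}\int_Q |\bar y_{\tau_j} - y_d|^2\dx\dt \to \frac{1}{2}\int_Q |\bar y - y_d|^2\dx\dt$, I subtract to obtain $\nu \|\bar u_{\tau_j}\|_\MT \to \nu \|\bar u\|_\MT$, and the $Y$-convergence $\bar y_{\tau_j} \to \bar y$ is precisely Lemma \ref{L3.1}.

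The only genuinely delicate point is the passage $J(\Lambda_{\tau_j} u) \to J(u)$, which requires the joint use of Proposition \ref{P4.1} (for the nonsmooth term, where weak$^*$ convergence alone would only give lower semicontinuity) and Lemma \ref{L3.1} (for the tracking term). Everything else is bookkeeping around Banach--Alaoglu and the admissibility $\U \subset \MT$.
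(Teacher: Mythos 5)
Your proposal is correct and follows essentially the same route as the paper: boundedness from $J(\bar u_\tau)\le J(0)$, weak$^*$ compactness, Lemma \ref{L3.1} for the tracking term, weak$^*$ lower semicontinuity of the measure norm, comparison with $\Lambda_{\tau_j}u$ via Proposition \ref{P4.1}, and recovery of norm convergence by subtracting the convergent tracking part from $J(\bar u_{\tau_j})\to J(\bar u)$. The only cosmetic difference is that the paper compares directly against $\Lambda_{\tau_j}\tilde u$ for a fixed solution $\tilde u$ of \Pb, whereas you compare against arbitrary $u\in\MT$ and handle the full-sequence convergence a bit more explicitly.
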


\begin{proof}
The boundedness in $\MT$ is an immediate consequence of the inequalities $J(\bar u_\tau) \le J(0)$ $\forall \tau$. Assume that $u_{\tau_j} \stackrel{*}{\rightharpoonup}\bar u$ in $\MT$  as $j \to \infty$. From Lemma \ref{L3.1} we have $\bar y_{\tau_j} \to \bar y$ in $Y$. Let $\tilde u$ be a solution of \Pb. Then, we get with Proposition \ref{P4.1} and Lemma \ref{L3.1}
\[
J(\bar u) \le \liminf_{j \to \infty}J(\bar u_{\tau_j}) \le \limsup_{j \to \infty}J(\bar u_{\tau_j})\le \limsup_{j \to \infty}J(\Lambda_{\tau_j}\tilde u) = J(\tilde u) = \inf\Pb.
\]
From these inequalities we deduce that $\bar u$ is a solution of \Pb. Consequently, we have that $J(\bar u) = J(\tilde u)$. Then, using again the above inequalities we get that $J(\bar u_{\tau_j}) \to J(\bar u)$. This convergence along with $y_{\bar u_{\tau_j}} \to \bar y$ in $L^2(Q)$ implies that $\|\bar u_{\tau_j}\|_\MT \to \|\bar u\|_\MT$.
\end{proof}
\section{Numerical examples}\setcounter{equation}{0}
Next we present some test examples to illustrate our results.
To solve the problem, we have used a Tikhonov regularization of $\Pbt$.
 For $c>0$, we consider the problem
\[\Pbtc \min_{u_\tau\in\U} J(u_\tau)+\frac{1}{2c} \sum_{k=0}^{N_\tau}u_k^2.\]
{The} first order optimality conditions for $\Pbtc$ read {as follows}:
\begin{theorem} Let $u_{\tau}^c$ be a local solution of $\Pbtc$. Then there exist $y_{\tau}^c\in Y\cap C(\bar Q\cup\bar Q_{-})$, $\varphi_{\tau}^c\in H^1(Q)\cap C(\bar Q\cup\bar Q_{+})$ and $\lambda_{\tau}^c\in \partial j_{\tau}(u_\tau^c)$ such that
\begin{align}
&\left\{\begin{array}{rcll}\displaystyle\frac{\partial y_\tau^c}{\partial t} - \Delta y_\tau^c + R( y_\tau^c) &=
&\displaystyle K[ u_\tau^c] y_\tau^c + g_{u_\tau^c} \ &\text{ in } Q,\vspace{2mm}\\
\displaystyle \partial_n y_\tau^c &=& 0 \ &\text{ on } \Sigma,\\[1ex]
 y_\tau^c(x,t) &=& y_0(x,t) \ &\text{ in }Q_-, \end{array}\right.\label{E5.1}\\
&\left\{\begin{array}{rcll}\displaystyle -\frac{\partial\varphi_\tau^c}{\partial t} - \Delta\varphi_\tau^c + R'(y_\tau^c)\varphi_\tau^c &=&\displaystyle K^*[ u_\tau^c]\varphi_\tau^c +  y_\tau^c - y_d \ &\text{ in } Q,\vspace{2mm}\\ \displaystyle \partial_n\varphi_\tau^c &=& 0 \ &\text{ on } \Sigma,\\[1ex]\varphi_\tau^c &=& 0 \ &\text{ in } Q_+, \end{array}\right.\label{E5.2}\\
&-\nu\lambda_k^c =  \int_{-t_k}^T\int_\Omega y_\tau^c(x,t)\varphi_\tau^c(x,t+t_k)\dx\dt+\frac{1}{c}u_k^c\quad \forall\, 0 \le k \le N_\tau. \label{E5.3}
\end{align}
\label{T5.1}
\end{theorem}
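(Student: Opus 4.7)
The plan is to mimic the proofs of Theorems \ref{T3.3} and \ref{T4.1}, incorporating the additional contribution of the quadratic Tikhonov term. The state equation \eqref{E5.1} is simply the definition of $y_\tau^c$ as the state associated with $u_\tau^c$, whose well-posedness is guaranteed by Theorem \ref{T2.1}. The adjoint equation \eqref{E5.2} defines $\varphi_\tau^c$, and existence, uniqueness, and the regularity $\varphi_\tau^c \in H^1(Q)\cap C(\bar Q \cup \bar Q_+)$ follow from Proposition \ref{P3.1} applied to $u = u_\tau^c$, after extending by zero onto $Q_+$. So the real content is establishing the optimality identity \eqref{E5.3}.

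Since $u_\tau^c$ is a local minimum in the finite-dimensional subspace $\U$ and the quadratic penalty is $C^1$ in the coefficients, the convexity of $j_\tau$ together with the $C^1$ smoothness of $F$ (Theorem \ref{T3.2}) gives, for every $u_\tau = \sum_{k=0}^{N_\tau} u_k \delta_{t_k} \in \U$,
\[
0 \le \lim_{\rho \searrow 0}\frac{1}{\rho}\Big[J(u_\tau^c + \rho(u_\tau - u_\tau^c)) + \tfrac{1}{2c}\!\sum_k (u_k^c + \rho(u_k - u_k^c))^2 - J(u_\tau^c) - \tfrac{1}{2c}\!\sum_k (u_k^c)^2\Big],
\]
which yields
\[
F'(u_\tau^c)(u_\tau - u_\tau^c) + \nu\bigl(j_\tau(u_\tau) - j_\tau(u_\tau^c)\bigr) + \frac{1}{c}\sum_{k=0}^{N_\tau} u_k^c\,(u_k - u_k^c) \ge 0.
\]

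Next I evaluate $F'(u_\tau^c)\delta_{t_k}$. By Theorem \ref{T3.2}, $F'(u_\tau^c)v = \int_Q \varphi_\tau^c(K[v]y_\tau^c + g_v)\dx\dt$, and by exactly the Fubini-and-substitution calculation performed in the proof of Theorem \ref{T3.3}, one obtains
\[
F'(u_\tau^c)\delta_{t_k} = \int_{-t_k}^T\!\!\int_\Omega y_\tau^c(x,t)\,\varphi_\tau^c(x,t+t_k)\dx\dt.
\]
Defining $\bar\lambda_k^c$ by \eqref{E5.3}, the variational inequality becomes $-\nu\sum_k \lambda_k^c(u_k - u_k^c) + \nu(j_\tau(u_\tau) - j_\tau(u_\tau^c)) \ge 0$ for all $u_\tau \in \U$, which is precisely the definition of $\lambda_\tau^c \in \partial j_\tau(u_\tau^c)$.

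The arguments are routine once the adjoint calculation of Theorem \ref{T3.3} is available; the only mild obstacle is bookkeeping the contribution of the Tikhonov term and checking that it enters additively into the formula for $\lambda_\tau^c$, which it does because $\frac{1}{2c}\sum_k u_k^2$ is smooth with gradient $\frac{1}{c}(u_0,\dots,u_{N_\tau})$.
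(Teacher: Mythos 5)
Your proof is correct and follows essentially the route the paper intends: Theorem \ref{T5.1} is stated without proof as the direct analogue of Theorems \ref{T3.3} and \ref{T4.1}, and your argument — well-posedness of \eqref{E5.1}--\eqref{E5.2} from Theorem \ref{T2.1} and Proposition \ref{P3.1}, the same Fubini/substitution adjoint computation for $F'(u_\tau^c)\delta_{t_k}$, and the smooth Tikhonov term contributing its gradient $\tfrac{1}{c}u_k^c$ additively in the definition of $\lambda_k^c$ before invoking the subdifferential characterization of Proposition \ref{P4.2} — is exactly that adaptation.
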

Taking into account Proposition \ref{P4.2}, the condition on the subgradient $\lambda_\tau^c\in\partial j_\tau(u_\tau^c)$ can be written as
\begin{equation}
 u_k^c = \max\{0, u_k^c+C(\lambda_k^c-1)\}+\min\{0, u_k+C( \lambda_k^c+1)\}\ \forall C>0.\label{E5.4}
\end{equation}
We solve the system \eqref{E5.1}--\eqref{E5.4} by a semi-smooth Newton method. The linear system arising at each iteration is reduced to a linear system for the active part of the control variable that is solved using GMRES. To solve the delay linear parabolic partial differential equations that appear in the process we consider the standard continuous piecewise linear finite elements in space and piecewise constant discontinuous Galerkin method in time, i.e.,  dG(0)cG(1) discretization. The problem is solved first for some small value  $c=c_0>0$ with initial guess $u\equiv 0$. Given the solution for some value $c=c_k$, $k\geq 0$, it is taken as the initial guess to solve the optimality system for $c=c_{k+1}>c_k$. The process stops when further changes of $c_k$ do not {alter} the solution in a significant way. We choose $C=c\nu$ at every iteration.

In all our examples, the reaction term is of the form \[R(y) = \rho(y-y_1)(y-y_2)(y-y_3).\]

\setcounter{theorem}{0}

\begin{example}[Example with known critical point]\label{Example52}\end{example}
To test the discretization and the optimization algorithm, we first {construct} an example with known solution of the optimality system given in Theorem \ref{T3.3}.

Consider $\Omega=(0,1)\subset\mathbb{R}$, $T=1$, $\rho =1/3$, $-y_1=y_3=\sqrt{3}$, $y_2=0$. Define
\[\bar u = u^{*}\delta_{t^*},\]
where $t^*=0.5$ and $u^*=-7.7$. With this control, we compute (an approximation of) its related state $\bar y$ solving the state equation with prehistory \[y_0(x,t) = \frac{1+t}{5}\sin^2(\pi x).\]
{For} this example, we use a discretization of 257 evenly spaced nodes both in space and time to solve the parabolic partial differential equations.

Next we define
{
\[
\bar\varphi(x,t) = \left\{\begin{array}{cl}
\cos^2\left(\frac{\pi}{2}t\right)&\mbox{ if }0\leq t \leq T,\vspace{2mm}\\
0 & \mbox{ if } T\leq t.
\end{array}\right.
\]}
This function satisfies the boundary and final conditions of the adjoint state equation. {Moreover,}  we define
\[y_d(x,t) = \partial_t \bar\varphi(x,t) -R'(\bar y)\bar\varphi +\bar y +\int_0^T\bar\varphi(x,t+s)\mathrm{d}\bar u(s).\]
{Taking into account that $-\Delta \bar\varphi =0$,} we have that $\bar\varphi$ satisfies the adjoint state equation{; see} Figure \ref{F51} for a picture of the computed target.

Finally, we compute
\[\nu \bar\lambda(s)=-\int_0^T\int_\Omega \bar y(x,t-s)\bar\varphi(x,t) \dx \dt.\]
With our choices of $t^*$, $u^*$ {,} and $\bar\varphi(x,t)$, we have that ${s \mapsto} \nu\bar\lambda(s)$ is a strictly convex function in $[0,1]$ that has a minimum at $t^*$ such that $\nu\bar\lambda(t^*)=-3.39817\times 10^{-4}$ (see Figure \ref{F51}). If we define $\nu = 3.39817\times 10^{-4}$, we have that $|\bar\lambda(s)| < 1$ for all $s\neq t^*$ and $\bar\lambda(t^*)=-1$,
and therefore  ($\bar u$, $\bar y$, $\bar\varphi$, $\bar\lambda$) satisfies {the} first order optimality conditions for problem $\Pb$ with data $\nu$ and $y_d$.

\begin{figure}
  \centering
  \includegraphics[width=.45\textwidth]{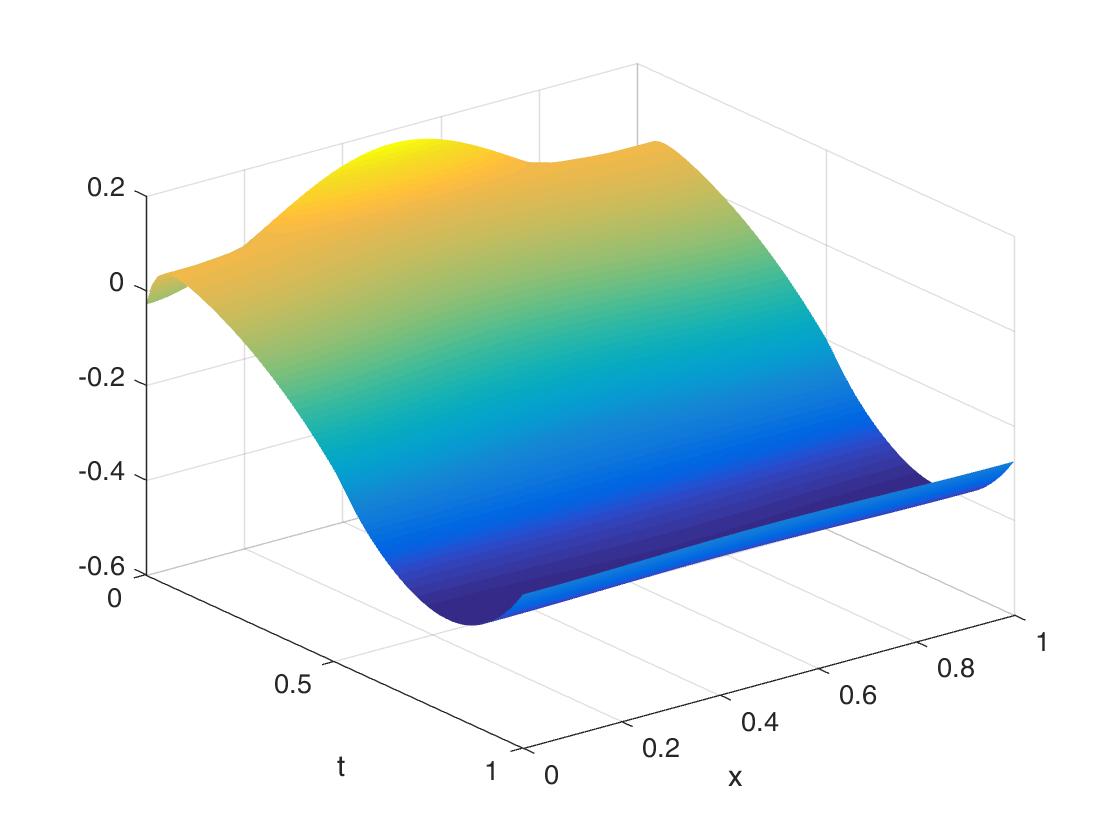}
  \includegraphics[width=.45\textwidth]{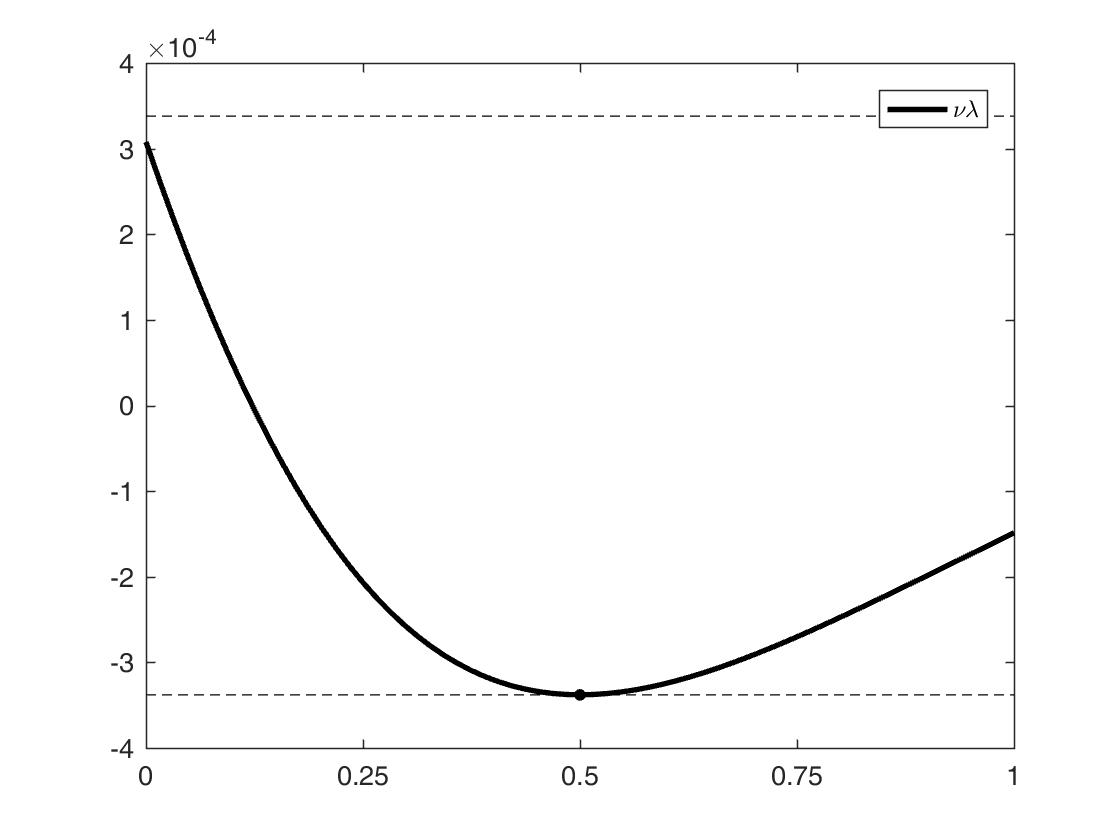}
  \caption{Target (left) and $\nu\bar\lambda(t)$ for Example \ref{Example52}}\label{F51}
\end{figure}

The values of the differentiable and non-differentiable parts of the functional are
\[F(\bar u) =  9.067\times 10^{-5}\mbox{ and }\nu    j(\bar u) =  2.617\times 10^{-3}.\]

If we solve the problem with a discretization of the control space such that $\bar{u}\in\U$, we recover the original solution with {four} digits of accuracy.
To set a more realistic scenario, we test our software in grids with constant time step $\tau=3^{-k}$, $k=2:5$, so that $\bar{u}\not\in\U$.

{The numerical results are displayed in Table \ref{TABLA51}.}  Notice that we are able to confirm all the results {of} Theorem \ref{T4.2}. We write $t^{-}_\tau=t^*-\tau/2$ and $t^+_{\tau}=t^*+\tau/2$ for the closest points to $t^*$ in the control mesh.  The values corresponding to the original solution $\bar u$ {are} included in the last row of the table.

\begin{table}[h!]
\[\begin{array}{c|c|c|c|c|c}
\tau   & \bar u_\tau                              & \|\bar y-\bar y_\tau\|_{Y} &\|\bar u_\tau\|_{\MT} & F(\bar u_\tau) & \nu j(\bar u_\tau) \\ \hline
3^{-2} & -3.33\delta_{t^{-}_\tau} -4.50\delta_{t^{+}_\tau} & 3.15e-5                   & 7.831                & 9.132e{-5}     & 2.661e{-3}   \\
3^{-3} & -3.30\delta_{t^{-}_\tau}-4.43\delta_{t^{+}_\tau}   &  9.40e-7                     & 7.730                & 9.107e{-5}     & 2.627e{-3}  \\
3^{-4} & -2.55\delta_{t^{-}_\tau}-5.17\delta_{t^{+}_\tau}   &  2.87e-8                     & 7.720                & 9.067e{-5}     & 2.623e{-3}  \\
3^{-5} & -0.11\delta_{t^{-}_\tau}-7.61\delta_{t^{+}_\tau}& 9.72e-9                     & 7.719                & 9.066e-{5}     & 2.623e{-3}\\ \hline
\mbox{exact}      &  -7.7\delta_{t^*}                        & 0                            & 7.7                  & 9.067e{-5}     & 2.617e{-3}
\end{array}\]
\caption{Results of Example with known solution\label{TABLA51}}
\end{table}

\begin{example}[{Sensitivity to the regularization parameter $\nu$}]\end{example}
For the same problem as {above}, we illustrate how the solution changes as $\nu$ varies. {It} can be expected {that} the value of $F$ decreases as $\nu$ decreases, and both $\|\bar u_\tau\|_{\MT}$ as well as the number of points in the support of $\bar u_\tau$ increase. As {it} was proved in Proposition \ref{P3.4}, there is a 
{$\bar\nu>0$} such that 
{the optimal control is zero for $\nu \ge \bar\nu$}. In this example, we use a discretization of 65 {equidistant nodes} both in space and time. We use the same time grid for the control discretization. Our results are shown in Table \ref{Tabla52}{.}

\begin{table}[h!]
\[\begin{array}{cccc}
    \nu & F_\tau (\bar u_\tau) & \|\bar u_\tau\|_{\MT} & \sharp\mathrm{supp}\bar u_\tau \\ \hline
    1e-1 & 8.85e-2 & 0 & 0 \\
    6e-2 & 8.85e-2 & 0 & 0 \\
    5e-2 & 8.74e-2 & 0.02 & 1 \\
    1e-2 & 1.95e-2 & 3.54 & 1 \\
    1e-3 & 3.79e-3 & 7.36 & 2 \\
    1e-4 & 5.97e-5 & 7.95 & 2 \\
    1e-5 & 4.31e-5 & 8.83 & 4 \\
    1e-6 & 3.38e-5 & 13.3 & 8 \\
    1e-7 & 2.12e-5 & 59.4 & 31 \\
    1e-8 & 1.85e-5 & 170.0 & 56
  \end{array}
  \]
  \caption{Sensitivity of the norm and the support of the optimal control\label{Tabla52}}
\end{table}

\begin{example}[Recovering the solution of a {system with} non-local Pyragas {feedback control}]\end{example}
We consider the data of Example 1 in \cite{nestler_schoell_troeltzsch2015}, {namely}  $\Omega=(-20,20)$, $T=40$, $\rho = 1$, $y_1=0$, $y_2=0.25$, $y_3=1$. The prehistory is given by
\[y_0(x,t) = \frac12(y_1+y_3)+\frac{1}{2}(y_1-y_3)\tanh\left(\frac{(y_3-y_1)}{2\sqrt{2}}(x-ct)\right),\]
where
\[c=\frac{y_1+y_3-2y_2}{\sqrt{2}}.\]
The desired state is the solution of the state equation with delay term given by the measure $u_d\in\MT$ defined as
\[\int_{[0,T]} y(x,t-s)du_d(s) = \kappa\left(\frac{1}{t_b-t_a}\int_{t_a}^{t_b} y(x,t-s) ds -y(x,t)\right),\]
with parameters $\kappa = 0.5$, $t_a=0.456$, $t_b=0.541$. We fix the parameter $\nu=1e-2$ --this is big enough to obtain a combination of {Dirac measures} -- and {we} will look for solutions in $\mathcal{M}[0,1]$. Since for the given delay term $\|u_d\|_{\mathcal{M}[0,1]} = 2\kappa = 1$ and $F(u_d)=0$,  it holds $J(u_d)=\nu$.
Numerically, we obtain the solution
\[\bar u = -0.240821\delta_{s=0}+0.246667\delta_{s=1}{.}
\]
{The associated values of the objective are} $F(\bar u) = 5.3e-5$ and $\nu j(\bar u) = 4.87e-3$,  {hence} $J(\bar u) = 0.49\nu$. The target and the optimal state are illustrated in Figure \ref{F_NSTEx1}. To solve the equations, we have used a space mesh with 513 evenly spaced nodes and a time grid with a variable stepsize and 778 nodes. The discretization of the control has been done with a grid of 101 equidistant nodes in $[0,1]$.

\begin{figure}
  \centering
  \includegraphics[width=.45\textwidth]{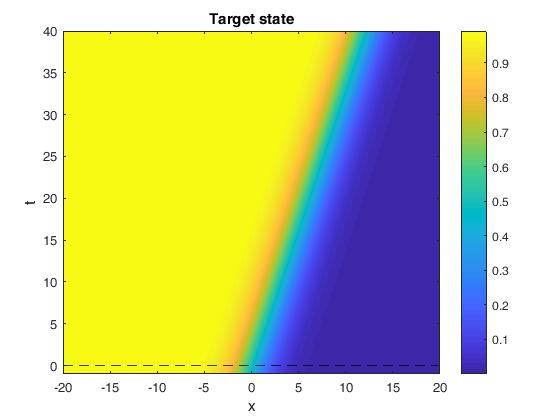}
  \includegraphics[width=.45\textwidth]{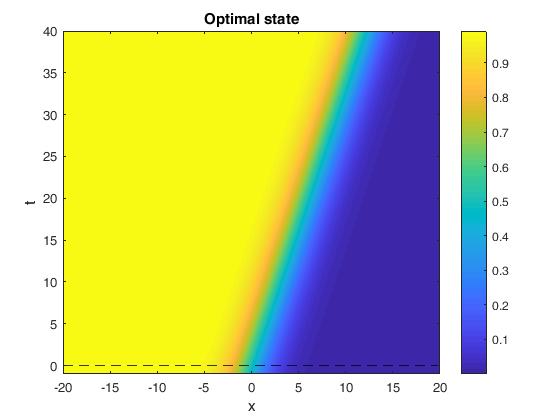}
  \caption{Target (left) obtained with a non-local Pyragas feedback control and optimal state (right)}\label{F_NSTEx1}
\end{figure}

\begin{example}[{Steering} the system to an unstable equilibrium point]\end{example} {Here, } our data are $\Omega=(0,1)$, $T=2$, $\rho = 1$, $y_1=0$, $y_2=0.25$, $y_3=1$. The prehistory is given by $y_0(x,t)\equiv y_3$, which {is} a stable equilibrium point and the target is $y_d(x,t)\equiv y_2$, which is an unstable equilibrium point. Since the data do not depend on $x$ and the boundary conditions are satisfied, the problem is equivalent to controlling a nonlinear delay ODE. We fix $\nu=1e-3$ and consider the tracking only on $[T/2,T]$. Therefore, here we redefine the differentiable part of the functional $J(u)$ by
\begin{equation}F(u) = \frac12\int_{T/2}^T\int_\Omega(y_u(x,t)-y_d(x,t))dxdt.\label{E5.5}\end{equation}
With $N_\tau = 512$ time steps, we obtain
\[\bar u = -1.304\delta_{0.418} +0.134\delta_{1.977}+0.220\delta_{1.978},\]
$F(\bar u) = 1.29e-4${,} and $J(\bar u) = 1.787e-3$.

\begin{example}[Changing the period of an incoming wave]\label{Example5}\end{example} {We use the} same data as in the previous example{, but} $y_0(x,t) = \cos^2(2\pi t)/2$ and $y_d(x,t)=\cos^2(\pi t)/2$. We fix $\nu=1e-3$ and {take} $F(u)$ as defined in \eqref{E5.5}.
{By} a discretization with $N_\tau=256$ time steps, we obtain
\[\bar u = 0.3188\delta_{0} -1.5499 \delta_{0.4219} -0.9964 \delta_{0.8047} + 2.7233\delta_{1.5234} \]
{and the objective values} $F(\bar u) = 6.57e-4$ and $J(\bar u) = 6.25e-3$.  Prehistory, target, uncontrolled state, and the state associated with the computed optimal delay control are illustrated in Figure \ref{F55}, where we plot the functions for $x=0$.
\begin{figure}[h!]
  \centering
  \includegraphics[width=0.5\textwidth]{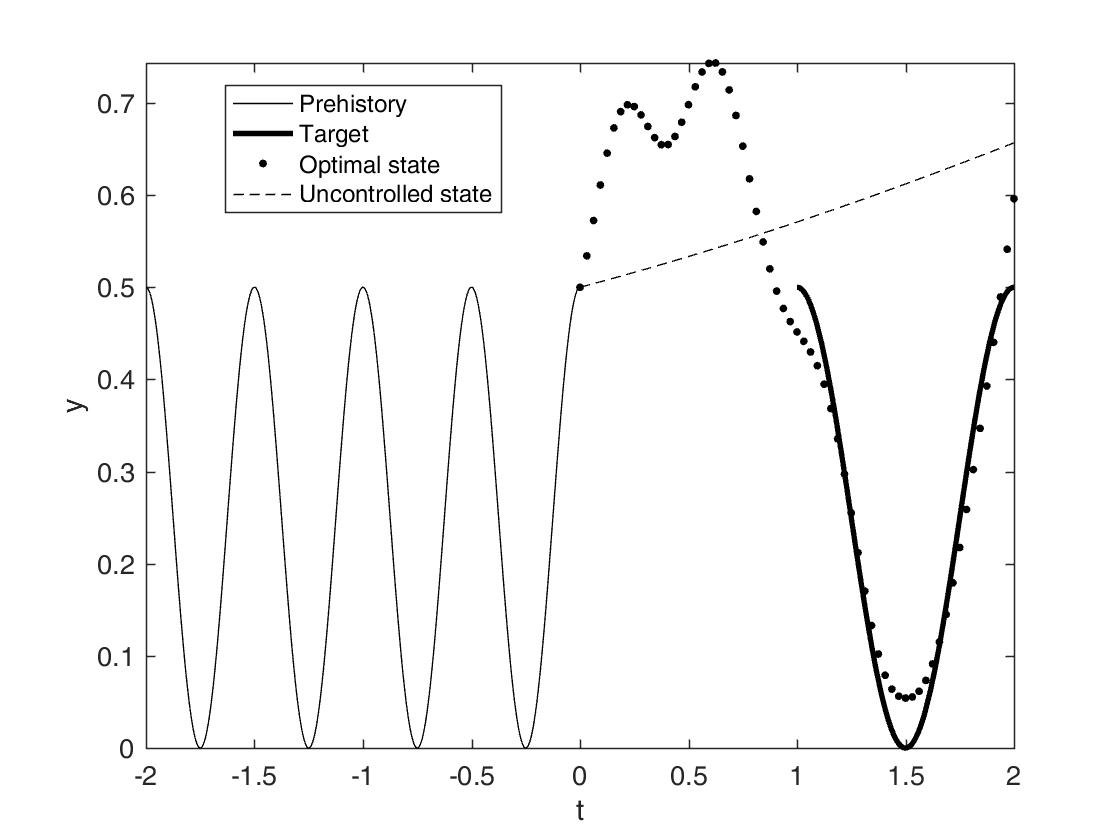}
  \caption{Data and solution for Example \ref{Example5}}\label{F55}
\end{figure}

%
\end{document}